\newtheorem{Theorem}{Theorem}[section]
\newtheorem{theorem}{Theorem}[section]
\newtheorem{definition}[Theorem]{Definition}
\newtheorem{lemma}[Theorem]{Lemma}
\newtheorem{proposition}[Theorem]{Proposition}
\newtheorem{remark}[Theorem]{Remark}
\numberwithin{equation}{section}
\def \dis {\displaystyle}
\newcommand{\bOm}{\overline{\Omega}}
\newcommand{\be}{\begin{equation}}
\newcommand{\ee}{\end{equation}}
\newcommand{\bdis}{\begin{displaymath}}
\newcommand{\edis}{\end{displaymath}}
\newcommand{\R}{\mathbb{R}}
\newcommand{\Rn}{\mathbb{R}^N}
\def \O {\overline{\Omega} }
\newcommand{\NN}{\mathcal{N}_{s}}
\newcommand{\N}{\mathbb{N}}
\def \R {\mathbb{R}}
\def \N {\mathbb{N}}
\DeclareSymbolFont{pxfontssymbolsC}{U}{pxsyc}{m}{n}
\DeclareMathSymbol{\coloneqq}{\mathrel}{pxfontssymbolsC}{66}
\title{Optimal control of diffusion equation with missing data governed by Dirichlet fractional Laplacian}
\author{J-D.~Djida, P.F.~Soh and G.~Mophou}
\begin{document}
\date{}
\maketitle
\let\thefootnote\relax\footnotetext{jeandaniel.djida@usc.es (J-D. Djida), pasquini.soh@aims-cameroon.org (P.F. Soh), gisele.mophou@univ-antilles.fr (G. Mophou ).}

\let\thefootnote\relax\footnotetext{Departamento de Estat\'{\i}stica, An{\'a}lise Matem\'{a}tica e Optimizaci\'on, Universidade de Santiago de Compostela, 15782 Santiago de Compostela, Spain and African Institute for Mathematical Sciences (AIMS), P.O. Box 608, Limbe Crystal Gardens, South West Region, Cameroon}
\let\thefootnote\relax\footnotetext{African Institute for Mathematical Sciences (AIMS), P.O. Box 608, Limbe Crystal Gardens, South West Region, Cameroon}
\let\thefootnote\relax\footnotetext{African Institute for Mathematical Sciences (AIMS), P.O. Box 608, Limbe Crystal Gardens, South West Region, Cameroon and Laboratoire L.A.M.I.A., D\'{e}partement de Math\'{e}matiques et Informatique, Universit\'{e} des Antilles, Campus Fouillole, 97159 Pointe-\`{a}-Pitre,(FWI), Guadeloupe.}
\bigskip

\begin{abstract}
We consider an optimal control problem of diffusion equation with missing data governed by the fractional Laplacian with homogeneous Dirichlet boundary conditions on an arbitrary interaction domain disjoint from the domain of the state equation. We assume that the unknown initial condition belongs to an appropriate space of infinite dimension, the so-called space of uncertainties. The key tools we used in order to characterize the optimal control is the no-regret and low-regret control developed by J.L Lions. 
\end{abstract}

\section{Introduction}\label{sec:introduction} 
The motivation for the growing interest in studying evolution equations involving fractional Laplace operator relies in the large number of possible applications in the modeling of several complex natural phenomena for which a local approach turns up to be inappropriate or limiting such as anomalous transport and diffusion \cite{Bologna, Meerschaert}. Indeed, there is  ample situations in which a nonlocal equation gives a significantly better description of a phenomenon than a PDE. Such models with nonlocal operators are now experiencing impressive applications in different subjects, among others, we mention applications in optimization~\cite{DL76}, finance~\cite{CT04}, the thin obstacle problem~\cite{Sil07, MS08}, porous media flow \cite{JLVAZ}, continuum mechanics, population dynamics, stochastic processes of L\'{e}vy type, phase transitions~\cite{CSM05, SiV09, FV11}, stratified materials~\cite{SV09, CV09, CV10}, anomalous diffusion~\cite{MK00, JLVAZ, MMM11}, crystal dislocation~\cite{GM10, BKM10}, see also~\cite{Sil05,Sil07,fall-schroding-17} for further motivation and applications.
The controlled  fractional diffusion equation we consider in this paper is of missing initial data.  Such models could describe diffusion of pollution in a porous media. We assume that we do not know when the phenomenon began. To solve this problem we use the notion of no-regret and least regret control \cite{Lions1992}. There are few works in the literature using these concepts of optimal control \cite{Nakoulima2000, Nakoulima2002, Jacob2010, Nakoulima2004, Gabay, Lions2000, Nakoulima2003, Nakoulima1999, Lions1999}. In those papers one can observe that this concept of no regret and low-regret find their application on the control of distributed linear systems possessing with missing data. A generalization of this approach for some nonlinear distributed systems possessing incomplete data has been also developed in the reference \cite{Nakoulima2002}. We stress out that this concept found also its usefulness in the control of population dynamics, propagation of pollution problems \cite{Nakoulima2000, Jacob2010}. Recently in \cite{Mophou2015} the author use this concept to control fractional diffusion equation with missing boundary condition. The authors in \cite{baleanu} applied the same concept to a fractional wave equation with missing initial velocity of state. As far as we know this concept of optimal control has not been  used to study evolution equation with missing initial data involving fractional Laplacian operators. So, in this paper we first prove that there exists a unique low-regret control which can bring  the state of the considered fractional diffusion equation to a desired state. As the low regret control is obtained by relaxing the cost associated to the no-regret control, we obtain that the low-regret control converge towards the no-regret control that we characterize with an optimality system. \par 

The rest of paper is organized as follows. In Section \ref{Sect2}, we give some notations and definitions of functional spaces and their associated norms for the need of this work. We also recall some results on existence and uniqueness of the weak solution of the considered nonlocal fractional diffusion equation. In Section \ref{Sec:FormPB}, we give the formulation of the problem that will be analyzed through this paper and recall the main concept of no and low regret control. Finally in Section \ref{Sec:No-low-concept}, we study the low regret and the no regret control problem and give the optimality system that characterizes each control. 

\section{Preliminaries}\label{Sect2}
In this section, we start by introducing some spaces and their norms which will be used throughout the paper. Then we provide some elementary properties of the fractional Laplacian $(-\Delta)^{s}$ that we will need. \medskip

Let $\Omega$ be an open bounded subset of $\R^N, \, N\in \N\setminus \{0\}$  with Lipschitz boundary. For any $s\in (0,1)$ and $ p\in [1,+\infty)$, we recall that the fractional Sobolev space $W^{s,p}(\Omega )$ is defined as follows:
\[
W^{s,p}(\Omega ):=\left\{ w\in L^{p}(\Omega):\;\int_{\Omega\times \Omega}
\frac{|w(x)-w(y)|^{p}}{|x-y|^{N+ps}}dx~dy<\infty \right\}.
\]
It is endowed with the natural norm
\[
\Vert w\Vert _{W^{s,p}(\Omega )}:=\left( \int_{\Omega }|w|^{p}\;dx+\int_{\Omega\times\Omega }\frac{|w(x)-w(y)|^{p}}{|x-y|^{N+ps}}
dx~dy\right) ^{\frac{1}{p}}.
\]
we set
\[
W_0^{s,2}(\O):= \left\lbrace w\in W^{s,2}(\R^N):\; w=0\;\mbox{ on }\;\R^N\setminus\Omega\right\rbrace.
\]
If we denote by $\mathbb{D}(\Omega)$ the space of  continuously infinitely differentiable functions with compact support in $\Omega$, then $\mathbb{D}(\Omega)$ is dense in  $W_0^{s,2}(\O)$ because the boundary of $\Omega$ is Lipschitz and continuous ( see \cite{fice}).
We then  denote by $W^{-s,2}(\O)= \left(W^{s,2}_{0}(\O)\right)^{\prime}$, the dual of the Hilbert space $W^{s,p}_{0}(\O)$.\medskip

We recall the following continuous embedding of fractional Sobolev spaces.
 \begin{enumerate}[1)]
 \item if $0 < s \leq s' <1$,
\[
W^{s',2}(\O) \hookrightarrow W^{s,2}(\O).
\]
 Hence there exists a constant $C(N,s) \geq 1$ such that, for any $w \in W^{s',2}(\Omega)$ :
\[
\left\|w\right\|_{W^{s,2}(\O)} \leq C(N,s) \left\|w\right\|_{W^{s',2}(\O)}.
\]
\item for $0<s<1$,
\[
W_0^{s,2}(\O)\hookrightarrow L^2(\Omega)\hookrightarrow W^{-s,2}(\O).
\]
\end{enumerate}
\begin{remark}\label{remcont}
Set
\begin{equation}\label{defW}
W(0,T):=\dis \left\{\rho \, \big| \, \,\rho \in L^{2}\left((0,T);W^{s,2}_{0}(\O)\right), \dis \partial_{t}\rho \in L^{2}\left((0,T);W^{-s,2}(\O)\right)\right\}.
\end{equation}
Then from \cite[Theorem 10]{Tommaso}, we have $ W(0,T)\subset \mathcal{C}\left([0,T];L^{2}(\Omega)\right)$
\end{remark}

Next we recall the definition of the fractional nonlocal operators that we are interested in this work, the fractional laplacian in the integral formulation.\medskip

Let us consider the following space
\[
\mathcal{L}_{s}^{1}(\R^N):=\left\{w:\R^N\to\R\;\mbox{ measurable},\; \int_{\R^N}\frac{|w(x)|}{(1+|x|)^{N+2s}}\;dx<\infty\right\}.
\]
For $w\in \mathcal L_s^{1}(\R^N)\cap \mathcal{C}^{2}_{loc}(\R^N)$ and for a positive small enough constant $\varepsilon$ we set
\[
(-\Delta)_\varepsilon^s w(x):= C_{N,s}\int_{\{y\in\R^N:\;|x-y|>\varepsilon\}}\frac{w(x)-w(y)}{|x-y|^{N+2s}}\;dy,\;\;x\in\R^N,
\]
where $C_{N,s}$ is a normalization constant, given by
\[
C_{N,s}:=\frac{s2^{2s}\Gamma\left(\frac{2s+N}{2}\right)}{\pi^{\frac
N2}\Gamma(1-s)}.
\]

The {\bf fractional Laplacian} $(-\Delta)^s$ of $w \in \mathcal L_s^{1}(\R^N)\cap \mathcal{C}^{2}_{loc}(\R^N)$ is defined by the following singular integral:
\begin{equation}\label{Eq:fractional_Laplacian_def}
(-\Delta)^{s}w(x)=C_{N,s}\,\mbox{P.V.}\int_{\R^N}\frac{w(x)-w(y)}{|x-y|^{N+2s}}\;dy=\lim_{\varepsilon\downarrow 0}(-\Delta)_\varepsilon^s w(x),\;\;x\in\R^N,
\end{equation}
provided that the limit exists.\medskip

For more details on the fractional Laplace operator we refer to \cite{BBC,Caf3,NPV,GW-CPDE,War,War-In} and references therein.\medskip

For $w\in W^{s,2}(\R^N)$ we recall the {\bf nonlocal normal derivative $\NN$} given by
\begin{equation}\label{Eq: NLND}
\NN w(x):=C_{N,s}\int_{\Omega}\frac{w(x)-w(y)}{|x-y|^{N+2s}}\;dy,\;\;\;x\in\R^N\setminus\bOm.
\end{equation}
The following result is taken from \cite[Lemma 3.2]{GSU} and \cite[Lemma 3.6]{Warma2018}.

\begin{lemma}\label{lem:GSU}
The operator  $\NN$ maps $W^{s,2}(\R^N)$ into $W_{\rm loc}^{s,2}(\R^{N}\setminus\Omega)$, where
\[
W_{\rm loc}^{s,2}(\Omega) := \left\lbrace w \in L^{2}(\Omega):~ w \varphi \in W^{s,2}(\Omega), \quad \forall ~~ \varphi \in \mathbb{D}(\Omega) \right\rbrace
\]
\end{lemma}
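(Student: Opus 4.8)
The plan is to establish the mapping property of the nonlocal normal derivative $\NN$ by a direct estimation of the singular integral defining $\NN w$, localized against a test function $\vp\in\mathbb{D}(\R^N\setminus\Omega)$. First I would fix an arbitrary $\vp\in\mathbb{D}(\R^N\setminus\Omega)$ and set $K:=\supp\vp$, which is a compact subset of $\R^N\setminus\bOm$; in particular $\dist(K,\Omega)=:\d_0>0$. Because $\Omega$ is bounded, the kernel $|x-y|^{-(N+2s)}$ with $x\in K$ and $y\in\Omega$ is bounded above (by $\d_0^{-(N+2s)}$) and smooth in $x$, so the map $x\mapsto \NN w(x)$ is actually smooth on a neighbourhood of $K$ whenever $w\in L^1(\Omega)$; the point is to control it in the $W^{s,2}$ norm by $\|w\|_{W^{s,2}(\R^N)}$, i.e. to show $\vp\,\NN w\in W^{s,2}(\Omega)$ (equivalently $W^{s,2}(\R^N)$, since $\vp$ has compact support away from $\Omega$).

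The key steps, in order, are: (i) pointwise bound. Write $\NN w(x)=C_{N,s}\big(w(x)\int_\Omega|x-y|^{-(N+2s)}\diff y-\int_\Omega w(y)|x-y|^{-(N+2s)}\diff y\big)$; since $x\in K$ forces the first term's integral to be $\le |\Omega|\,\d_0^{-(N+2s)}$ and $w(x)=0$ there (as $x\notin\Omega$), we get $|\NN w(x)|\le C_{N,s}\,\d_0^{-(N+2s)}\int_\Omega|w(y)|\diff y\le C\,\|w\|_{L^2(\Omega)}$ by Cauchy–Schwarz. (ii) Gradient/difference bound. For $x_1,x_2$ in a neighbourhood of $K$, estimate $|\NN w(x_1)-\NN w(x_2)|$ by passing the difference onto the kernel: $\big||x_1-y|^{-(N+2s)}-|x_2-y|^{-(N+2s)}\big|\le C\,|x_1-x_2|\,\d_0^{-(N+2s+1)}$ uniformly in $y\in\Omega$, hence $|\NN w(x_1)-\NN w(x_2)|\le C\,|x_1-x_2|\,\|w\|_{L^2(\Omega)}$, so $\NN w$ is Lipschitz on that neighbourhood. (iii) Gagliardo seminorm. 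Using $\vp\in\mathbb{D}$ and the product rule for the Gagliardo seminorm, together with the Lipschitz bound from (ii) (which gives $|\NN w(x)-\NN w(y)|^2/|x-y|^{N+2s}\le C|x-y|^{2-N-2s}$, integrable near the diagonal on the bounded set $\supp\vp$), bound $[\vp\,\NN w]_{W^{s,2}(\R^N)}$ by $C(\vp,\Omega,s,N)\,\|w\|_{L^2(\Omega)}$. Combining (i) and (iii) yields $\|\vp\,\NN w\|_{W^{s,2}(\Omega)}\le C\,\|w\|_{W^{s,2}(\R^N)}$, i.e. $\NN w\in W^{s,2}_{\rm loc}(\R^N\setminus\Omega)$, which is the claim.

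Since the statement is quoted from \cite[Lemma 3.2]{GSU} and \cite[Lemma 3.6]{Warma2018}, the cleanest route is simply to invoke those references; the sketch above is the self-contained argument behind them. The main obstacle — really the only nontrivial point — is being careful that the relevant singular behaviour sits entirely in the $x$-variable near the diagonal of $\supp\vp\times\supp\vp$, while the $y$-integration over $\Omega$ is harmless because $\dist(\supp\vp,\Omega)>0$; once that separation is exploited, every estimate is an elementary kernel bound plus Cauchy–Schwarz, and no regularity of $w$ beyond $w\in L^2(\R^N)\subset\mathcal{L}^1_s(\R^N)$ is needed.
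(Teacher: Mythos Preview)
The paper does not supply its own proof of this lemma; it simply cites \cite[Lemma 3.2]{GSU} and \cite[Lemma 3.6]{Warma2018}. So there is nothing to compare against beyond the references themselves. That said, your self-contained sketch contains a genuine gap.

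In step (i) you assert that $w(x)=0$ for $x\in K\subset\R^N\setminus\bOm$ ``as $x\notin\Omega$''. This is false: the hypothesis is $w\in W^{s,2}(\R^N)$, not $w\in W_0^{s,2}(\bOm)$, and $w$ has no reason to vanish outside $\Omega$. Consequently the decomposition
\[
\NN w(x)=C_{N,s}\Big(w(x)\int_\Omega|x-y|^{-(N+2s)}\diff y-\int_\Omega w(y)|x-y|^{-(N+2s)}\diff y\Big)
\]
does \emph{not} reduce to its second term, and neither your pointwise bound in (i) nor your Lipschitz bound in (ii) survives: the factor $w(x)$ is merely an $W^{s,2}$ function, so $\NN w$ need not be bounded or Lipschitz on any neighbourhood of $K$. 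Your closing remark that ``no regularity of $w$ beyond $w\in L^2(\R^N)$ is needed'' is therefore also wrong.

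The repair is short but uses the full hypothesis. Write $\NN w(x)=w(x)G(x)-H(x)$ with $G(x)=C_{N,s}\int_\Omega|x-y|^{-(N+2s)}\diff y$ and $H(x)=C_{N,s}\int_\Omega w(y)|x-y|^{-(N+2s)}\diff y$. Your kernel estimates show that $G$ and $H$ are $C^\infty$ on a neighbourhood of $K$ (indeed on all of $\R^N\setminus\bOm$), with $H$ controlled by $\|w\|_{L^2(\Omega)}$. Hence $\vp H\in C^\infty_c(\R^N)\subset W^{s,2}(\R^N)$, while $\vp G\in C^\infty_c(\R^N)$ acts as a multiplier on $W^{s,2}(\R^N)$, giving $(\vp G)\,w\in W^{s,2}(\R^N)$ with norm bounded by $C(\vp,\Omega,s,N)\|w\|_{W^{s,2}(\R^N)}$. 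This last multiplier step is precisely where the $W^{s,2}$ regularity of $w$ enters and cannot be dispensed with.
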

It follows from Lemma \ref{lem:GSU}, that if $w \in W^{s,2}(\R^N)$, then $\NN w\in L^2(\R^{N}\setminus\Omega)$.\medskip

We thus have  the integration by parts formula.
\begin{proposition}\label{propintegration_parts}
Let $w\in W^{s,2}(\R^N)$ be such that $(-\Delta)^{s} w \in L^2(\Omega)$. Then for every $v\in W^{s,2}(\R^N)$, the identity
\begin{equation}\label{Eq:Int-Part}
\frac{C_{N,s}}{2}\int\int_{\R^{2N}\setminus \left(\R^{N}\setminus \Omega \right)^2}\frac{\left(w(x)-w(y)\right)\left(v(x)-v(y)\right)}{|x-y|^{N+2s}}~dx~dy
=\int_{\Omega}v(-\Delta)^{s}w~dx + \int_{\R^{N}\setminus \Omega} v \NN w~dx,
\end{equation}
holds, where
\[
\R^{2N}\setminus(\R^{N}\setminus \Omega)^2=(\Omega\times\Omega)\cup(\Omega\times(\R^{N}\setminus \Omega))
\cup((\R^{N}\setminus \Omega)\times\Omega).
\]
Moreover, if $w=0$ in $(\R^{N}\setminus\Omega)$ then \eqref{Eq:Int-Part} can be rewritten as
\begin{equation}\label{Eq:Int-Partbis}
\frac{C_{N,s}}{2}\int_{\R^N}\int_{\R^{N}}\frac{\left(w(x)-w(y)\right)\left(v(x)-v(y)\right)}{|x-y|^{N+2s}}~dx~dy
=\int_{\Omega}v(-\Delta)^{s}w\;dx+\int_{\R^{N}\setminus \Omega}v \NN w~dx.
\end{equation}
\end{proposition}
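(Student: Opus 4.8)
The plan is to establish the identity first for the truncated operator $(-\Delta)^s_\varepsilon$ from \eqref{Eq:fractional_Laplacian_def}, where Fubini's theorem is unproblematic, and then let $\varepsilon\downarrow 0$. First, all three terms of \eqref{Eq:Int-Part} are well defined: the integrand on the left is dominated by $\tfrac{|w(x)-w(y)|\,|v(x)-v(y)|}{|x-y|^{N+2s}}$, which is integrable over $\R^{2N}$ (hence over $\R^{2N}\setminus(\R^N\setminus\Omega)^2$) by the Cauchy--Schwarz inequality and $w,v\in W^{s,2}(\R^N)$; since $W^{s,2}(\R^N)\hookrightarrow L^2(\R^N)$, the term $\int_\Omega v\,(-\Delta)^s w\,dx$ is finite because $v\in L^2(\Omega)$ and $(-\Delta)^s w\in L^2(\Omega)$, and $\int_{\R^N\setminus\Omega} v\,\NN w\,dx$ is finite because $v\in L^2(\R^N\setminus\Omega)$ and $\NN w\in L^2(\R^N\setminus\Omega)$ by \lemref{lem:GSU} and the remark following it.

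For $\varepsilon>0$ small put $I_\varepsilon:=\tfrac{C_{N,s}}{2}\iint_{(\R^{2N}\setminus(\R^N\setminus\Omega)^2)\cap\{|x-y|>\varepsilon\}}\tfrac{(w(x)-w(y))(v(x)-v(y))}{|x-y|^{N+2s}}\,dx\,dy$. Using the decomposition $\R^{2N}\setminus(\R^N\setminus\Omega)^2=(\Omega\times\Omega)\cup(\Omega\times(\R^N\setminus\Omega))\cup((\R^N\setminus\Omega)\times\Omega)$ and the invariance of the integrand and of $\{|x-y|>\varepsilon\}$ under $x\leftrightarrow y$, one writes $I_\varepsilon$ as the integral over $(\Omega\times\Omega)\cap\{|x-y|>\varepsilon\}$ with prefactor $\tfrac{C_{N,s}}{2}$, plus the integral over $(\Omega\times(\R^N\setminus\Omega))\cap\{|x-y|>\varepsilon\}$ with prefactor $C_{N,s}$. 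On these truncated regions the kernel is bounded, so Fubini applies; writing $(w(x)-w(y))(v(x)-v(y))=v(x)(w(x)-w(y))-v(y)(w(x)-w(y))$, symmetrizing the $\Omega\times\Omega$ contribution and rearranging, one obtains
\[
I_\varepsilon=\int_\Omega v(x)\,(-\Delta)^s_\varepsilon w(x)\,dx+\int_{\R^N\setminus\Omega} v(x)\,\NN^\varepsilon w(x)\,dx ,
\]
where $\NN^\varepsilon w(x):=C_{N,s}\int_{\Omega\cap\{|x-y|>\varepsilon\}}\tfrac{w(x)-w(y)}{|x-y|^{N+2s}}\,dy$ is the truncation of $\NN w$.

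Finally one sends $\varepsilon\downarrow 0$. The left-hand side $I_\varepsilon$ converges to the left-hand side of \eqref{Eq:Int-Part} by dominated convergence, with the integrable majorant of the first step. For $\int_\Omega v\,(-\Delta)^s_\varepsilon w\,dx$ one uses that $(-\Delta)^s_\varepsilon w\to(-\Delta)^s w$ — this is exactly what the hypothesis $(-\Delta)^s w\in L^2(\Omega)$ provides — together with $v\in L^2(\Omega)$; for $\int_{\R^N\setminus\Omega}v\,\NN^\varepsilon w\,dx$ one uses that $\NN^\varepsilon w\to\NN w$ pointwise a.e.\ on $\R^N\setminus\Omega$ (for $x\notin\partial\Omega$ the integral defining $\NN w(x)$ converges absolutely by Cauchy--Schwarz and $w\in W^{s,2}(\R^N)$), controlled near $\partial\Omega$ by $|\NN^\varepsilon w(x)|\le C_{N,s}\int_\Omega\tfrac{|w(x)-w(y)|}{|x-y|^{N+2s}}\,dy$ uniformly in $\varepsilon$, together with $\NN w\in L^2(\R^N\setminus\Omega)$ and $v\in L^2(\R^N\setminus\Omega)$; this yields \eqref{Eq:Int-Part}. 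For the last assertion, if $w=0$ on $\R^N\setminus\Omega$ then $w(x)-w(y)=0$ on $(\R^N\setminus\Omega)^2$, so the Gagliardo double integral over $\R^{2N}$ equals the one over $\R^{2N}\setminus(\R^N\setminus\Omega)^2$, and \eqref{Eq:Int-Partbis} follows at once from \eqref{Eq:Int-Part}.

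The step I expect to be delicate is the passage to the limit in the two principal-value objects on the right: one must simultaneously absorb the diagonal singularity of the Gagliardo kernel (handled by the $\varepsilon$-truncation and $w,v\in W^{s,2}(\R^N)$) and keep $(-\Delta)^s_\varepsilon w$ and $\NN^\varepsilon w$ under control near $\partial\Omega$ uniformly in $\varepsilon$. This is precisely where the hypothesis $(-\Delta)^s w\in L^2(\Omega)$ and the mapping property of $\NN$ in \lemref{lem:GSU} are used in an essential way; the remaining manipulations (splitting the domain, the change of variables $x\leftrightarrow y$, and Fubini on the truncated regions) are routine.
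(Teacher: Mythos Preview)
The paper does not prove this proposition; it is stated in Section~\ref{Sect2} as a known preliminary (the formula goes back to Dipierro--Ros-Oton--Valdinoci and is implicit in the references \cite{GSU,Warma2018} cited around Lemma~\ref{lem:GSU}), so there is no argument in the paper to compare yours against.

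Your truncation strategy is the standard one, and the algebraic identity $I_\varepsilon=\int_\Omega v\,(-\Delta)^s_\varepsilon w\,dx+\int_{\R^N\setminus\Omega}v\,\NN^\varepsilon w\,dx$ is correct. The passage to the limit, however, is not fully justified. For the $\NN^\varepsilon$ term you propose the pointwise majorant $M(x)=C_{N,s}\int_\Omega|w(x)-w(y)|\,|x-y|^{-N-2s}\,dy$ and then invoke $\NN w\in L^2$ and $v\in L^2$; but dominated convergence needs $|v|\,M\in L^1(\R^N\setminus\Omega)$, and since $M\ge|\NN w|$ with strict inequality in general, the $L^2$ information about $\NN w$ does not supply that majorant. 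The companion claim that the hypothesis $(-\Delta)^s w\in L^2(\Omega)$ ``provides'' $(-\Delta)^s_\varepsilon w\to(-\Delta)^s w$ is likewise unjustified: the pointwise principal-value definition \eqref{Eq:fractional_Laplacian_def} is given only for $w\in\mathcal{L}^1_s(\R^N)\cap C^2_{\rm loc}(\R^N)$, whereas here $w\in W^{s,2}(\R^N)$ and $(-\Delta)^s w\in L^2(\Omega)$ must be read distributionally; nothing then guarantees a.e.\ or $L^2$ convergence of the truncations. The usual remedy is to run your computation for $w\in C^\infty_c(\R^N)$ first (where all integrals are absolutely convergent and no limit issue arises) and then pass to general $w$ by density, using that each of the three terms in \eqref{Eq:Int-Part} is continuous with respect to the $W^{s,2}(\R^N)$ norm of $w,v$ and the $L^2(\Omega)$ norm of $(-\Delta)^s w$.
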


Consider the following nonlocal fractional diffusion equation:
\begin{equation}\label{eqintro}
\left\lbrace
\begin{array}{lll}
\dis \partial_{t} p + (-\Delta)^{s}p &= f  & \textrm{ in } Q, \\
p &= 0 & \textrm{ on } \Xi,\\
p(0,x) &= p^0 & \textrm{ in } \Omega,
\end{array}
\right.
\end{equation}
where $ 0<s<1,$ $f\in L^2(Q)$, $p^0\in L^2(\Omega)$ and $\Xi:= (\R^{N}\setminus\Omega)\times (0,T)$, for $T>0$. The associated definition of weak solutions of \eqref{eqintro} with finite energy is given as follows.
\begin{definition}\label{weak_sol_def_fe}
We say that $p\in W(0,T)$ is a finite energy solution to problem \eqref{eqintro}, if the identity
\begin{equation}\label{weak-sol-fe}
\begin{array}{llllll}
\dis \int_{0}^T\int_\Omega w~\partial_{t}p~ dx~dt + \dis \frac{C_{N,s}}{2}\int_0^T\int_{\R^N}
\int_{\R^N}\frac{(p(x,t)-p(y,t))(w(x,t)-w(y,t))}{|x-y|^{N+2s}}\,dx~dy~dt =\\
\dis  \int_{0}^T \int_\Omega fv~dx~dt,
\end{array}
\end{equation}
holds, for any $w\in L^2((0,T);W_0^{s,2}(\O))$.
\end{definition}
Thanks to the continuous imbedding of $L^2(Q)$ into $ L^2((0,T);W^{-s,2}(\O))$, we have the following results\cite{Tommaso}.
\begin{proposition}\label{prop1}
Assume that $f\in L^2(Q)$, then for any $p^0\in L^2(\Omega)$ problem \eqref{eqintro} has a unique finite energy solution  in $W(0,T)$.\medskip
\end{proposition}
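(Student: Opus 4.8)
The plan is to recast \eqref{eqintro} as an abstract linear parabolic Cauchy problem in the Gelfand triple $W_0^{s,2}(\Omega)\hookrightarrow L^2(\Omega)\hookrightarrow W^{-s,2}(\Omega)$ and appeal to the classical variational theory. I would introduce the symmetric bilinear form
\[
a(u,v):=\frac{C_{N,s}}{2}\int_{\R^N}\int_{\R^N}\frac{(u(x)-u(y))(v(x)-v(y))}{|x-y|^{N+2s}}\,dx\,dy,\qquad u,v\in W_0^{s,2}(\Omega),
\]
and first check that it is well defined and continuous on $W_0^{s,2}(\Omega)$ (immediate from the definition of the Gagliardo seminorm, since elements of $W_0^{s,2}(\Omega)$ vanish on $\R^N\setminus\Omega$, so that the double integral over $\R^N\times\R^N$ is just the squared seminorm). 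The essential point is coercivity: because $\Omega$ is bounded, a fractional Poincar\'e inequality provides $c_0>0$ with $a(u,u)\ge c_0\|u\|_{W_0^{s,2}(\Omega)}^2$ for all $u\in W_0^{s,2}(\Omega)$. Since $L^2(Q)$ embeds continuously into $L^2((0,T);W^{-s,2}(\Omega))$ and $p^0\in L^2(\Omega)$, the classical theorem of J.-L.\ Lions on parabolic problems (equivalently, a Lax--Milgram-in-time argument) then yields a unique $p$ with $p\in L^2((0,T);W_0^{s,2}(\Omega))$ and $\partial_t p\in L^2((0,T);W^{-s,2}(\Omega))$, i.e.\ $p\in W(0,T)$, solving $\partial_t p+Ap=f$ with $p(0)=p^0$, where $A\colon W_0^{s,2}(\Omega)\to W^{-s,2}(\Omega)$ is the bounded operator induced by $a$; unwinding $\langle Ap,w\rangle$ via the integration by parts formula \eqref{Eq:Int-Partbis} shows this is precisely the finite-energy solution of Definition~\ref{weak_sol_def_fe}.

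If a self-contained construction is preferred, I would instead run a Faedo--Galerkin scheme: let $\{\varphi_k\}_{k\ge1}$ be the $L^2(\Omega)$-orthonormal eigenfunctions of $(-\Delta)^s$ with homogeneous exterior Dirichlet condition (which also form an orthogonal basis of $W_0^{s,2}(\Omega)$), look for $p_m(t)=\sum_{k=1}^m g_k^m(t)\varphi_k$ solving the projected linear ODE system, which has a unique global solution. Testing that system with $p_m$ and using coercivity of $a$, one obtains after time integration and Gr\"onwall's inequality the estimate
\[
\|p_m\|_{L^\infty((0,T);L^2(\Omega))}+\|p_m\|_{L^2((0,T);W_0^{s,2}(\Omega))}\le C\big(\|f\|_{L^2(Q)}+\|p^0\|_{L^2(\Omega)}\big),
\]
uniformly in $m$; bounding $\partial_t p_m$ in $L^2((0,T);W^{-s,2}(\Omega))$ from the equation, extracting a weakly-$*$ convergent subsequence and passing to the limit in the variational identity produces $p\in W(0,T)$ satisfying \eqref{weak-sol-fe}. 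That the initial datum is attained in a meaningful sense follows from Remark~\ref{remcont}, which gives $p\in\mathcal{C}([0,T];L^2(\Omega))$.

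Uniqueness is the easy half, obtained by linearity: if $p_1,p_2$ are two finite-energy solutions, then $p:=p_1-p_2\in W(0,T)$ solves the homogeneous problem with $p(0)=0$. Using the Lions--Magenes identity $\frac{d}{dt}\|p(t)\|_{L^2(\Omega)}^2=2\langle\partial_t p(t),p(t)\rangle$ valid on $W(0,T)$ and testing with $p$ itself gives $\frac12\frac{d}{dt}\|p(t)\|_{L^2(\Omega)}^2+a(p(t),p(t))=0$; integrating in time and using $a(p,p)\ge0$ together with $p(0)=0$ forces $p\equiv0$. The one genuinely non-routine ingredient here is the coercivity of $a$, namely the fractional Poincar\'e inequality on $W_0^{s,2}(\Omega)$ for the bounded set $\Omega$; once that (and the compatibility of the nonlocal form with \eqref{weak-sol-fe} via \eqref{Eq:Int-Partbis}) is secured, existence, uniqueness and the energy bound are standard consequences of the abstract parabolic theory — as carried out in detail in \cite{Tommaso}.
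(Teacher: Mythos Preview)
Your proposal is correct and complete. Note, however, that the paper does not actually supply a proof of this proposition: it merely cites the result from \cite{Tommaso} (Leonori--Peral--Primo--Soria), prefaced by the remark that $L^2(Q)$ embeds continuously into $L^2((0,T);W^{-s,2}(\Omega))$. What you have written --- casting the problem in the Gelfand triple $W_0^{s,2}(\Omega)\hookrightarrow L^2(\Omega)\hookrightarrow W^{-s,2}(\Omega)$, verifying boundedness and coercivity (via the fractional Poincar\'e inequality) of the nonlocal form, and invoking Lions' abstract parabolic theorem or a Faedo--Galerkin scheme --- is precisely the standard route and is the argument carried out in that reference. So there is no methodological divergence to discuss: you have simply filled in what the paper outsources.
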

\begin{remark}\label{rem1} Since  $\dis \partial_{t}p\in L^2((0,T);W^{-s,2}(\O))$, the first integral in \eqref{weak-sol-fe} should be written
\[
\int_0^T
\left\langle \partial_{t}p , w \right\rangle_{W^{-s,2}(\O),W^{s,2}_0(\O)} dt.
\]
But using an appropriate change of variable and a cut-off argument, we can prove as in \cite{BWZ2} that if $p$ is a unique finite energy solution of \eqref{eqintro} then $\dis \partial_{t} p \in L^2(Q)$. Moreover we have the following estimate
\begin{eqnarray}
\dis \left\|\partial_{t}p\right\|_{L^2(Q)}&\leq & C(s,N,T)\left(\left\|f\right\|_{L^2(Q)}+\left\|p^0 \right\|_{L^2(\Omega)}\right)\label{estpt}\\
\dis \left\|p\right\|_{W^{s,2}_0(\O)}&\leq & C(s,N,T)\left(\left\|f\right\|_{L^2(Q)}+\left\|p^0 \right\|_{L^2(\Omega)}\right),\label{estp}
\end{eqnarray}
where $C(s,N,T)>0$ is a positive constant depending on $s,N$ and $T$.
\end{remark}

\begin{remark}\label{cont_unique}
From the Remark \ref{rem1}, we have that $p$ is solution of
\begin{equation}\label{eqintro1}
\left\lbrace
\begin{array}{lll}
\dis \partial_{t}p + (-\Delta)^{s}p &= 0  & \textrm{ in } Q, \\
p &= 0 & \textrm{ on } \Xi,\\
p(0,x) &= 0 & \textrm{ in } \Omega,
\end{array}
\right.
\end{equation}
with $ 0<s<1$ is identically zero.
\end{remark}
\begin{remark}
By a change of variable $t \mapsto T-t$, equation \eqref{eqintro} can be rewritten as
\begin{equation}\label{eqintro2}
\left\lbrace
\begin{array}{lll}
-\dis \partial_{t}\tilde{p} + (-\Delta)^{s}\tilde{p} &= \tilde{f}  & \textrm{ in } Q, \\
\tilde{p} &= 0 & \textrm{ on } \Xi,\\
\tilde{p}(T,x) &= p^0 & \textrm{ in } \Omega,
\end{array}
\right.
\end{equation}
where $ 0<s<1,$ $\tilde{p}= p(x,T-t),$ $\tilde{f}= f(T-t)$.  Therefore, $\tilde{p}$ is a finite energy solution to  problem \eqref{eqintro2}, if the identity
\begin{equation}\label{weak-sol-fe1}
\begin{array}{llllll}
-\dis \int_{0}^T\int_\Omega w \partial_{t}\tilde{p} dx~dt + \dis \frac{C_{N,s}}{2}\int_0^T\int_{\R^N}
\int_{\R^N}\frac{(\tilde{p}(x,t)-\tilde{p}(y,t))(w(x,t)-w(y,t))}{|x-y|^{N+2s}}\,dx~dy~dt =\\
\dis  \int_{0}^T \int_\Omega \tilde{f}v~dx~dt,
	\end{array}
\end{equation}
holds, for any $w\in L^2((0,T);W_0^{s,2}(\O))$. Moreover, if $\tilde{f}\in L^2(Q)$, then for any $p^0\in L^2(\Omega)$ problem \eqref{eqintro2} has a unique finite energy solution in $W(0,T)$ and the following estimates holds:
 \begin{eqnarray}
\dis \left\|\partial_{t} \tilde{p}\right\|_{L^2(Q)}&\leq & C(s,N,T)\left(\left\|\tilde{f}\right\|_{L^2(Q)}+\left\|p^0 \right\|_{L^2(\Omega)}\right)\label{estptb}\\
\dis \left\|\tilde{p}\right\|_{W^{s,2}_0(\O)}&\leq & C(s,N,T)\left(\left\|\tilde{f}\right\|_{L^2(Q)}+\left\|p^0 \right\|_{L^2(\Omega)}\right),\label{estpb}
\end{eqnarray}
where $C(s,N,T)>0$ is a positive constant depending on $s,N$ and $T$.
\end{remark}

\section{Formulation of the problem}\label{Sec:FormPB}
Let $s\in(0,1)$ and let also $\Omega$ be an open bounded subset of $\R^N, \, N\in \N\setminus \{0\}$  with Lipschitz boundary.
For any  time $T>0$ we set $Q = \R^N \times (0,T)$,  $\Xi=(\Rn \setminus \Omega) \times (0,T) $  and we consider the following nonlocal fractional diffusion equation:
\begin{equation}\label{Eq:main}
\left\lbrace
\begin{array}{lll}
\partial_{t}q + (-\Delta)^{s}q &= f + v & \textrm{ in } Q, \\
q &= 0 & \textrm{ on } \Xi,\\
q(0,x) &= g & \textrm{ in } \Omega,
\end{array}
\right.
\end{equation}
where $f$ in $ L^{2}(Q),$ the control function $v$ in $L^{2}(Q)$. The function $g\in L^2(\Omega)$ is unknown. Under the above assumption on the data, it has been shown that from Proposition \ref{prop1} the problem \eqref{Eq:main} admits a solution $q(v,g) =q(x,t;v,g)\in W(0,T)$ that depends on the control $v$ and on the missing initial data $g$. Since we want to bring the state $q$ solution of \eqref{Eq:main} to a desired state $z_d\in L^2(Q)$, we consider the cost function
$$
J(v,g) = \left \|q(v,g)-z_{d}\right \|_{L^{2}(Q)}^{2} + \aleph \left \| v \right \|^{2}_{L^{2}(Q)},
$$
where $\aleph > 0$. Observing that  the optimal control problem
$$
\inf_{v\in \mathscr{U}_{ad}}J(v,g), \qquad \forall~g\in L^2(Q),
$$
 has no sense, we are interested for any $\gamma>0$ in the following inf-sup problem:
\begin{equation}\label{Eq: no-regret}
\inf_{v\in \mathscr{U}_{ad}}\sup_{g\in L^2(\Omega)}\left[J_\gamma(v,g)-J_\gamma(0,g)\right],
\end{equation}

where 
\begin{equation}\label{Eq:functional}
J_\gamma(v,g) = \left \|q(v,g)-z_{d}\right \|_{L^{2}(Q)}^{2} + \aleph \left \| v \right \|^{2}_{L^{2}(Q)}-\gamma \|g\|^2_{L^2(\Omega)}.
\end{equation}
This problem is called low-regret control problem. It was introduced by J. L. Lions \cite{Lions1992}. This concept is an extension of the no-regret control also introduced by Lions which is stated as follows: find $v\in L^2(Q)$ solution of
\begin{equation}\label{Eq: no-regretintro}
\inf_{v\in \mathscr{U}_{ad}}\sup_{g\in L^2(\Omega)}\left[J(v,g)-J(0,g)\right].
\end{equation}
Actually, the control we are looking for is the no-regret control. But this  control problem is in general  difficult to characterize. That is the reason why we start by  studying the  low-regret problem \eqref{Eq: no-regret} which is easy to characterize. Then  we will show  that  when $\gamma$ tends zero this control  tends to the no-regret control. Finally we will obtain an optimality system for the no-regret control as a limit of the optimality system of the low-regret control.
	
\section{Study of inf-sup problem }\label{Sec:No-low-concept}

The aim of this section is to solve inf-sup problem given by the problem \eqref{Eq: no-regret}. Let $q(v,g):= q(x,t;v,g) \in W(0,T)$ be solution of \eqref{Eq:main}. Consider $q(v,0):=q(x,t;v,0)$, $q(0,g):=q(x,t;0,g)$ and  $q(0,0):=q(x,t;0,0)$ respectively solutions of
\begin{equation}\label{Eq:decomp1}
\left\lbrace
\begin{array}{lll}
\partial_{t}q(v,0) + (-\Delta)^{s}q(v,0) &= f + v &\textrm{ in } Q, \\
q(v,0) &= 0 &\textrm{ on } \Xi,\\
q(0;v,0) &= 0 & \textrm{ in } \Omega,
\end{array}
\right.
\end{equation}
\begin{equation}\label{Eq:decomp2}
\left\lbrace
\begin{array}{lll}
\partial_{t}q(0,g) + (-\Delta)^{s}q(0,g) &= f \textcolor{white}{+ v} &\textrm{ in } Q, \\
q(0,g) &= 0 &\textrm{ on } \Xi,\\
q(0;0,g) &= g & \textrm{ in } \Omega,
\end{array}
\right.
\end{equation}
and
\begin{equation}\label{Eq:decomp3}
\left\lbrace
\begin{array}{lll}
\partial_{t}q(0,0) + (-\Delta)^{s}q(0,0) &= f \textcolor{white}{+ v} &\textrm{ in } Q, \\
q(0,0) &= 0 &\textrm{ on } \Xi,\\
q(0;0,0) &= 0 & \textrm{ in } \Omega.
\end{array}
\right.
\end{equation}
Thus, according to the assumption on the data, $q(v,0)$, $q(0,g)$ and $q(0,0)$ belong to $ W(0,T)$ and we have the following result.
\begin{lemma}\label{lemma1-No-regret}
For any $v\in L^2(Q)$, $g\in L^2(\Omega)$, we have
\begin{equation}\label{Eq1:lemma1-No-regret}
\begin{array}{llll}
J_\gamma(v,g)-J_\gamma(0,g)&=&J_\gamma(v,0)-J_\gamma(0,0)\\ &+&
 \dis 2\int_Q \left[q(0,g)-q(0,0)\right] \left[q(v,0)-q(0,0)\right]dx~dt-
\gamma\|g\|^2_{L^2(\Omega)},
\end{array}
\end{equation}
where $J_\gamma$ is the functional defined in \eqref{Eq:functional}, $q(v,0)$, $q(0,g)$, $q(0,0)$ are respectively solutions of \eqref{Eq:decomp1}, \eqref{Eq:decomp2} and \eqref{Eq:decomp3}.
\end{lemma}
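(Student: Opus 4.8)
The plan is to exploit the linearity of the state equation and the quadratic structure of the cost functional. The key observation is that the solution map $(v,g)\mapsto q(v,g)$ is affine: by uniqueness of finite energy solutions (Proposition~\ref{prop1}) applied to the decomposition \eqref{Eq:decomp1}--\eqref{Eq:decomp3}, one checks directly that $q(v,g) = q(v,0) + q(0,g) - q(0,0)$, since the right-hand side solves the same problem \eqref{Eq:main} with data $f+v$ and initial condition $g$. Indeed, adding the weak formulations for $q(v,0)$ and $q(0,g)$ and subtracting that for $q(0,0)$ yields exactly the weak formulation for $q(v,g)$, and uniqueness closes the argument.

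With this identity in hand, I would write $q(v,g) - z_d = \big[q(v,0)-q(0,0)\big] + \big[q(0,g)-z_d\big]$ and expand the square $\|q(v,g)-z_d\|_{L^2(Q)}^2$ using the elementary identity $\|a+b\|^2 = \|a\|^2 + 2\int_Q ab + \|b\|^2$. Doing the same for $q(0,g)-z_d = \big[q(0,0)-z_d\big] + \big[q(0,g)-q(0,0)\big]$ gives an expression for $\|q(0,g)-z_d\|_{L^2(Q)}^2$. Subtracting, the pure-$g$ cross term $\|q(0,g)-q(0,0)\|^2$ and the terms $2\int_Q [q(0,0)-z_d][q(0,g)-q(0,0)]$ cancel, leaving
\[
\|q(v,g)-z_d\|_{L^2(Q)}^2 - \|q(0,g)-z_d\|_{L^2(Q)}^2
= \|q(v,0)-z_d\|_{L^2(Q)}^2 - \|q(0,0)-z_d\|_{L^2(Q)}^2 + 2\int_Q \big[q(0,g)-q(0,0)\big]\big[q(v,0)-q(0,0)\big]\,dx\,dt.
\]

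Finally I would assemble $J_\gamma(v,g)-J_\gamma(0,g)$ from \eqref{Eq:functional}: the penalization term $\aleph\|v\|_{L^2(Q)}^2$ appears in $J_\gamma(v,g)$ and $J_\gamma(v,0)$ identically (it does not depend on $g$), while $J_\gamma(0,g)$ and $J_\gamma(0,0)$ contribute no $v$-penalization; and the term $-\gamma\|g\|_{L^2(\Omega)}^2$ appears in both $J_\gamma(v,g)$ and $J_\gamma(0,g)$, so one copy of $-\gamma\|g\|_{L^2(\Omega)}^2$ survives in the difference $J_\gamma(v,g)-J_\gamma(0,g)$. Matching the state-dependent part against the identity displayed above, and recognizing $J_\gamma(v,0)-J_\gamma(0,0) = \|q(v,0)-z_d\|_{L^2(Q)}^2 - \|q(0,0)-z_d\|_{L^2(Q)}^2 + \aleph\|v\|_{L^2(Q)}^2$, gives precisely \eqref{Eq1:lemma1-No-regret}.

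There is no serious obstacle here; the argument is purely algebraic once the affine decomposition of the state is established. The only point requiring a little care is the justification of $q(v,g) = q(v,0)+q(0,g)-q(0,0)$, which rests on the well-posedness in $W(0,T)$ and the uniqueness statement (equivalently Remark~\ref{cont_unique}); everything else is bookkeeping of which terms carry the $\gamma$ and $\aleph$ factors and verifying the cancellations in the subtraction.
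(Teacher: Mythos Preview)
Your strategy---affine decomposition of the state via uniqueness, then quadratic expansion of the cost---is exactly the paper's, and your displayed identity for the difference of the $\|q(\cdot)-z_d\|_{L^2(Q)}^2$ terms is correct.

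There is, however, a slip in your bookkeeping of the $-\gamma\|g\|_{L^2(\Omega)}^2$ term. You write that it ``appears in both $J_\gamma(v,g)$ and $J_\gamma(0,g)$, so one copy \dots\ survives in the difference''; but appearing in both terms of a subtraction means it \emph{cancels}, not survives. Carrying out your own computation carefully yields
\[
J_\gamma(v,g)-J_\gamma(0,g)=J_\gamma(v,0)-J_\gamma(0,0)+2\int_Q\bigl[q(0,g)-q(0,0)\bigr]\bigl[q(v,0)-q(0,0)\bigr]\,dx\,dt,
\]
with no residual $-\gamma\|g\|_{L^2(\Omega)}^2$. The extra $-\gamma\|g\|_{L^2(\Omega)}^2$ in \eqref{Eq1:lemma1-No-regret} (and in the final displayed line of the paper's proof) does not follow from the definition \eqref{Eq:functional} of $J_\gamma$; with that definition the $\gamma$-terms cancel identically on the left-hand side and vanish on the right. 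What is actually used downstream (cf.\ \eqref{Eq3:lemma2-proof} and the Legendre--Fenchel step) is the identity $J(v,g)-J(0,g)-\gamma\|g\|_{L^2(\Omega)}^2=J(v,0)-J(0,0)+2\int_\Omega g\,\xi(0)\,dx-\gamma\|g\|_{L^2(\Omega)}^2$, and that \emph{does} follow from your argument once the cross term is rewritten via \eqref{Eq2:lemma2-proof}. So your method is right; only the sentence justifying the surviving $\gamma$-term is not.
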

\begin{proof}
The proof of this lemma relies on simple computations. Indeed we first observe that one can decompose $q(v,g)$ as
\begin{equation}\label{Eq1:lemma1-proof}
q(v,g)= q(v,0) + q(0,g) - q(0,0),
\end{equation}
so that replacing $q(v,g)$ by its expression in \eqref{Eq:functional}  yields
\begin{equation}\label{Eq3:lemma1-proof}
J_\gamma(v,g) = \left \|\left(q(v,0)-z_{d}\right)+ \left(q(0,g)-z_{d}\right)-\left(q(0,0)-z_{d}\right)\right \|_{L^{2}(Q)}^{2} + \aleph \left \| v \right \|^{2}_{L^{2}(Q)}-\gamma\|g\|^2_{L^2(\Omega)}.
\end{equation}
After some computations and identifications of some of the terms such as
\[
\begin{array}{rll}
J_\gamma(v,0) &=& \dis \int_{Q} \left[q(v,0)- z_{d} \right]\left[q(v,0)- z_{d} \right] dt~dx  + \aleph \parallel v \parallel^{2}_{L^{2}(Q)}, \\
J_\gamma(0,g) &=& \dis \int_{Q} \left[q(0,g)- z_{d} \right]\left[q(v,0)- z_{d} \right] dt~dx-\gamma\|g\|^2_{L^2(\Omega)}, \\
J_\gamma(0,0) &= &\dis \int_{Q} \left[q(0,0)- z_{d} \right]\left[q(0,0)- z_{d} \right] dt~dx,
\end{array}
\]
its comes that \eqref{Eq3:lemma1-proof} takes the form
\[
\begin{array}{rll}
J_\gamma(v,g)&=&J_\gamma(v,0)+J_\gamma(0,g)-J_\gamma(0,0)\\
&+&\dis 2\int_Q \left(q(v,0)-q(0,0)\right)\left(q(0,g)-q(0,0)\right)dx~dt-
\gamma\|g\|^2_{L^2(\Omega)}.
\end{array}
\]
which is the desired result.
\end{proof}
From Lemma \ref{lemma1-No-regret}, we prove the following result.

\begin{lemma}\label{lemequivalence}
Let $\gamma>0.$ Then problem \eqref{Eq: no-regret} is equivalent to the following problem: find $u^\gamma\in L^2(Q)$ solution to the optimal control:
\begin{equation}\label{Eq5:NO-Low}
\inf_{v\in L^2(Q)}J^\gamma (v),
\end{equation}
where
\begin{equation}\label{Eq6:NO-Low}
J^\gamma (v)=J_\gamma(v,0)-J_\gamma(0,0)+\frac{1}{\gamma}\|\xi(0;v)\|^2_{L^2(\Omega)},
\end{equation}
with  $J_\gamma$ given by \eqref{Eq:functional} and $\xi(v)=\xi(x,t;v)\in W(0,T)$, solution of
\begin{equation}\label{Eq2:lemma2-NO-regret}
\left\lbrace
\begin{array}{rlllllll}
-\partial_{t}\xi(v) + (-\Delta)^{s}\xi(v) &=& q(v,0)-q(0,0)  &\textrm{in}& Q, \\
\xi(v) &=& 0 &\textrm{on}& \Xi,\\
\xi(x,T;v) &=& 0 & \textrm{in}& \Omega.
\end{array}
\right.
\end{equation}
\end{lemma}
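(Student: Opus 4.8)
The plan is to rewrite the inf–sup expression in \eqref{Eq: no-regret} by explicitly carrying out the supremum over $g\in L^2(\Omega)$, using the decomposition from Lemma~\ref{lemma1-No-regret}. Starting from \eqref{Eq1:lemma1-No-regret}, for fixed $v$ the quantity to be maximized over $g$ is
\[
2\int_Q\bigl[q(0,g)-q(0,0)\bigr]\bigl[q(v,0)-q(0,0)\bigr]\,dx\,dt-\gamma\|g\|^2_{L^2(\Omega)},
\]
since $J_\gamma(v,0)-J_\gamma(0,0)$ does not depend on $g$. The first step is to express the cross term as a linear functional of $g$. Set $h:=q(0,g)-q(0,0)$; by linearity of \eqref{Eq:decomp2}–\eqref{Eq:decomp3}, $h$ solves the homogeneous forced problem $\partial_t h+(-\Delta)^s h=0$ in $Q$, $h=0$ on $\Xi$, $h(0,x)=g$. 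Introducing the adjoint state $\xi(v)$ defined by \eqref{Eq2:lemma2-NO-regret}, I would multiply the equation for $\xi(v)$ by $h$, integrate over $Q$, and integrate by parts in time (using $\xi(x,T;v)=0$ and $h(0,x)=g$) together with the symmetric bilinear form from Proposition~\ref{propintegration_parts}. The boundary/interaction terms vanish because both $h$ and $\xi(v)$ vanish on $\Xi$, so the self-adjointness of $(-\Delta)^s$ on $W_0^{s,2}(\Omega)$ gives
\[
\int_Q h\,\bigl[q(v,0)-q(0,0)\bigr]\,dx\,dt=\int_\Omega \xi(0;v)\,g\,dx .
\]
This is the key identity that converts the $g$-dependence into an explicit linear form $g\mapsto\int_\Omega\xi(0;v)g\,dx$.

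With that in hand, the supremum over $g$ becomes
\[
\sup_{g\in L^2(\Omega)}\Bigl[\,2\!\int_\Omega\xi(0;v)\,g\,dx-\gamma\|g\|^2_{L^2(\Omega)}\Bigr],
\]
a concave quadratic in the Hilbert space $L^2(\Omega)$, maximized at $g^\star=\tfrac1\gamma\,\xi(0;v)$ with value $\tfrac1\gamma\|\xi(0;v)\|^2_{L^2(\Omega)}$. Substituting back into \eqref{Eq1:lemma1-No-regret} and using the definition \eqref{Eq6:NO-Low} of $J^\gamma$, we obtain
\[
\sup_{g\in L^2(\Omega)}\bigl[J_\gamma(v,g)-J_\gamma(0,g)\bigr]=J_\gamma(v,0)-J_\gamma(0,0)+\frac1\gamma\|\xi(0;v)\|^2_{L^2(\Omega)}=J^\gamma(v),
\]
so taking the infimum over $v\in L^2(Q)=\mathscr{U}_{ad}$ on both sides shows \eqref{Eq: no-regret} and \eqref{Eq5:NO-Low} have the same value and the same minimizers; in particular $u^\gamma$ solves one iff it solves the other, which is the claimed equivalence. (The well-posedness of \eqref{Eq2:lemma2-NO-regret}, hence that $\xi(0;v)\in L^2(\Omega)$ makes sense, follows from Proposition~\ref{prop1} and the backward-equation remarks, since $q(v,0)-q(0,0)\in L^2(Q)$.)

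The main obstacle is the rigorous justification of the duality identity $\int_Q h\,[q(v,0)-q(0,0)]\,dx\,dt=\int_\Omega\xi(0;v)g\,dx$: one must legitimately integrate by parts in $t$ despite $h$ and $\xi(v)$ only having $W(0,T)$ regularity a priori. Here I would invoke Remark~\ref{remcont} to get $h,\xi(v)\in\mathcal C([0,T];L^2(\Omega))$ so that the time traces $h(0,\cdot)$ and $\xi(T,\cdot)$ are well defined in $L^2(\Omega)$, and Remark~\ref{rem1} (and its backward counterpart) to upgrade $\partial_t h,\partial_t\xi(v)$ to $L^2(Q)$, after which the integration-by-parts-in-time formula and the symmetric form of Proposition~\ref{propintegration_parts} apply directly; a density argument using that $\mathbb D(\Omega)$ is dense in $W_0^{s,2}(\Omega)$ handles any remaining technicality. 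Everything else is elementary: the identification of the $g$-independent terms is exactly the computation already recorded in the proof of Lemma~\ref{lemma1-No-regret}, and the optimization over $g$ is the standard completion-of-squares in a Hilbert space.
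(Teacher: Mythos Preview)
Your proposal is correct and follows essentially the same approach as the paper: introduce the auxiliary function $h=q(0,g)-q(0,0)$ (the paper calls it $z(g)$), use the adjoint $\xi(v)$ and integration by parts to obtain the duality identity $\int_Q h\,[q(v,0)-q(0,0)]\,dx\,dt=\int_\Omega \xi(0;v)\,g\,dx$, and then compute the supremum over $g$ (you do it by completing the square, the paper names it the Legendre--Fenchel transform). Your discussion of the regularity needed to justify the integration by parts is in fact more explicit than the paper's, which simply asserts the identity.
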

\begin{proof} From the fact that $q(v,0)-q(0,0)\in L^2(Q)$, Proposition \ref{prop1} allows to say that there exists a unique $\xi(v) \in W(0,T)$ solution to the problem \eqref{Eq2:lemma2-NO-regret}. Moreover there exists $C(s,N,T)>0$ such that  the following estimates hold:
\begin{eqnarray}
\left \| \xi(v) \right \|_{L^{2}\left((0,T);W_{0}^{s,2}(\O)\right)} &\leq & C(s,N,T) \left \| q(v,0)- q(0,0) \right \|_{L^{2}(Q)},\label{estxiv}\\
\left \| \dis \partial_{t} \xi(v) \right \|_{L^{2}\left((0,T);W_{0}^{-s,2}(\O)\right)} &\leq& C(s,N,T) \left \| q(v,0)- q(0,0) \right \|_{L^{2}(Q)}\label{estxitv}.
\end{eqnarray}
Next we set $z(g):= z =q(0,g)-q(0,0)$. Then in view of \eqref{Eq:decomp2} and \eqref{Eq:decomp3}, $z$ verifies
\begin{equation}\label{Eq1:lemma2-proof}
\left\lbrace
\begin{array}{rlllllll}
\dis \partial_{t} z(g) + (-\Delta)^{s}z(g) &=& 0  &\textrm{in}& Q, \\
z(g) &=& 0 &\textrm{on}& \Xi,\\
z(T,v) &=&g & \textrm{in}& \Omega,
\end{array}
\right.
\end{equation}
As $g\in L^{2}(\Omega)$, we have that \eqref{Eq1:lemma2-proof} has a unique solution $z(g)\in W(0,T)$. Now, multiplying the first equation in \eqref{Eq2:lemma2-NO-regret} by $z(g)$ solution of \eqref{Eq1:lemma2-proof} and integrating by parts over $Q$, we get
\begin{equation}\label{Eq2:lemma2-proof}
\int_\Omega g\xi(0)dx=\int_Q\left(q(v,0)-q(0,0)\right)\left(q(0,g)-q(0,0)\right)dx~dt.
\end{equation}
Combining this latter identity with \eqref{Eq1:lemma1-No-regret}, we get
\begin{equation}\label{Eq3:lemma2-proof}
\dis J_\gamma(v,g)-J_\gamma(0,g)=J_\gamma(v,0)-J_\gamma(0,0)+2\int_{\Omega} g\xi(0)dx~dt-\gamma\|g\|^2_{L^2(\Omega)}.
\end{equation}
Using the Legendre-Fenchel transform, we obtain that
\[
\sup_{g\in L^2(\Omega)}\left(2\int_\Omega g\xi(0)dx-\gamma\|g\|^2_{L^2(\Omega)}\right)= \frac{1}{\gamma}\|\xi(0;v)\|^2_{L^2(\Omega)}
\]
and  problem \eqref{Eq: no-regret} is equivalent to Problem \eqref{Eq5:NO-Low}, namely: find $u^\gamma\in L^2(Q)$ solution to
\[
\inf_{v\in L^2(Q)} J^\gamma (v),
\]
where
\[
J^\gamma (v)= J_{\gamma}(v,0)- J_{\gamma}(0,0)+\frac{1}{\gamma}\|\xi(0;v)\|^2_{L^2(\Omega)}.
\]
\end{proof}
\begin{remark}\label{rem2}
Note that in the case of no-regret control problem which correspond to the case $\gamma\to 0$, the relation \eqref{Eq3:lemma2-proof} becomes \begin{equation}\label{Eq3:lemma2-proofno}
\dis J(v,g)-J(0,g)=J(v,0)-J(0,0)+2\int_Q g\xi(0)dx~dt,
\end{equation}
and the no-regret control belongs belongs to a set $\mathcal{U}$ defined by
\[
\mathcal{U} = \left\lbrace v\in L^2(Q) ~\Big|~\left(\int_{\Omega}\xi(0,x;v)g dx \right) = 0,~~\text{for all}~g \in L^{2}(\Omega) \right\rbrace.
\]
\end{remark}
Problem \eqref{Eq5:NO-Low} is a classical optimal control problem. Using minimizing sequence, we will prove that this problem has a unique solution $u^\gamma$ that we will characterize.

\begin{theorem}\label{NO-Low-Theorem1}
There exists a unique  control $u^\gamma\in L^2(Q)$ which satisfies \eqref{Eq5:NO-Low}.
\end{theorem}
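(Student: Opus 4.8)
The plan is to use the direct method of the calculus of variations on the functional $J^\gamma$ from \eqref{Eq6:NO-Low}. First I would rewrite $J^\gamma(v)$ in a form that makes its structure transparent: expanding $J_\gamma(v,0)-J_\gamma(0,0)$ using the decomposition $q(v,0)=q(0,0)+(q(v,0)-q(0,0))$ and writing $\zeta(v):=q(v,0)-q(0,0)$, which by \eqref{Eq:decomp1}--\eqref{Eq:decomp3} is the solution of the linear problem $\partial_t\zeta+(-\Delta)^s\zeta=v$, $\zeta=0$ on $\Xi$, $\zeta(0,\cdot)=0$. One gets
\[
J^\gamma(v)=\|\zeta(v)\|_{L^2(Q)}^2+2\int_Q\zeta(v)\,(q(0,0)-z_d)\,dx\,dt+\aleph\|v\|_{L^2(Q)}^2+\tfrac1\gamma\|\xi(0;v)\|_{L^2(\Omega)}^2 ,
\]
since the $v$-independent terms cancel. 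The key point is that $v\mapsto\zeta(v)$ and $v\mapsto\xi(v)$ are \emph{linear} and continuous from $L^2(Q)$ into $W(0,T)$ (by Proposition~\ref{prop1} and the estimates \eqref{estp}, \eqref{estxiv}), and the evaluation $\zeta\mapsto\zeta(0,\cdot)$ makes sense in $L^2(\Omega)$ by Remark~\ref{remcont}. Hence $J^\gamma$ is a sum of continuous convex (indeed the first two quadratic-plus-linear terms, and the last two are nonnegative quadratics) functionals, so it is convex, continuous, and — thanks to the term $\aleph\|v\|_{L^2(Q)}^2$ with $\aleph>0$ — coercive on $L^2(Q)$.

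Next I would run the minimizing-sequence argument. Let $d^\gamma=\inf_{v\in L^2(Q)}J^\gamma(v)$; note $d^\gamma\le J^\gamma(0)=0<\infty$ and $d^\gamma>-\infty$ because, dropping the two nonnegative terms and using Cauchy--Schwarz on the cross term, $J^\gamma(v)\ge \aleph\|v\|_{L^2(Q)}^2-C\|v\|_{L^2(Q)}-C'$ for constants depending only on $\|q(0,0)-z_d\|_{L^2(Q)}$ and the embedding constants. Take $(v_n)$ with $J^\gamma(v_n)\to d^\gamma$. Coercivity gives $\|v_n\|_{L^2(Q)}$ bounded, so along a subsequence $v_n\weak u^\gamma$ in $L^2(Q)$. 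By linearity and continuity of the solution maps, $\zeta(v_n)\weak\zeta(u^\gamma)$ in $L^2(Q)$ (and in fact in $L^2((0,T);W^{s,2}_0(\O))$ with $\partial_t\zeta(v_n)$ bounded in $L^2((0,T);W^{-s,2}(\O))$, giving by Aubin--Lions strong $L^2(Q)$ convergence if needed, though weak convergence suffices here) and likewise $\xi(0;v_n)\weak\xi(0;u^\gamma)$ in $L^2(\Omega)$. The linear term $\int_Q\zeta(v_n)(q(0,0)-z_d)$ passes to the limit by weak convergence, and the three quadratic terms $\|\zeta(\cdot)\|^2$, $\|\cdot\|^2_{L^2(Q)}$, $\|\xi(0;\cdot)\|^2$ are weakly lower semicontinuous (norms are weakly l.s.c.). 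Therefore $J^\gamma(u^\gamma)\le\liminf J^\gamma(v_n)=d^\gamma$, so $u^\gamma$ is a minimizer.

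Finally, uniqueness: $J^\gamma$ is \emph{strictly} convex because $v\mapsto\aleph\|v\|_{L^2(Q)}^2$ is strictly convex and the remaining terms are convex; equivalently, if $u_1,u_2$ are two minimizers then the midpoint $(u_1+u_2)/2$ would give $J^\gamma$ strictly less than the common value unless $u_1=u_2$, using the parallelogram identity on the $\aleph\|\cdot\|^2$ term. This forces $u^\gamma$ to be unique. The main obstacle, and the only place real care is needed, is justifying that the maps $v\mapsto \zeta(0,\cdot)$... wait, $\zeta(0,\cdot)=0$; rather $v\mapsto\xi(0;v)$ is well-defined and weakly continuous into $L^2(\Omega)$: this rests on Remark~\ref{remcont} ($W(0,T)\hookrightarrow\C([0,T];L^2(\Omega))$) together with the linear estimates \eqref{estxiv}--\eqref{estxitv} for the backward problem \eqref{Eq2:lemma2-NO-regret}, so that $v_n\weak u^\gamma$ yields $\xi(v_n)$ bounded in $W(0,T)$, hence a subsequence converging weakly in $W(0,T)$ and, by compactness of the trace at $t=0$ on bounded sets of $W(0,T)$, strongly (or at least weakly) in $L^2(\Omega)$; passing to the limit in the linear state equations identifies the limit as $\xi(u^\gamma)$. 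Everything else is the standard convex-coercive template.
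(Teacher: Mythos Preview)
Your argument is correct and follows the same overall template as the paper's proof: direct method, minimizing sequence, weak compactness, weak lower semicontinuity, and strict convexity for uniqueness. The one noteworthy difference is in how you identify the limits. The paper extracts weak limits $q^\gamma,\xi^\gamma$ in $W(0,T)$ and then spends most of its proof (its ``Step~1'') passing to the limit in the weak formulations of \eqref{Eq6:NO-Low-Theorem1-proof}--\eqref{Eq7:NO-Low-Theorem1-proof} against test functions in $\mathbb{D}(\Omega\times(0,T))$, and separately recovering the initial/terminal conditions by a second integration-by-parts argument. You bypass this by observing that $v\mapsto\zeta(v)$ and $v\mapsto\xi(v)$ are \emph{linear} bounded operators (into $W(0,T)$, and thence into $L^2(Q)$ or $L^2(\Omega)$ via the continuous trace), so they are automatically weakly--weakly continuous; this immediately gives $\zeta(v_n)\weak\zeta(u^\gamma)$ and $\xi(0;v_n)\weak\xi(0;u^\gamma)$ without touching the PDE again. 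That is a cleaner packaging of the same content. One small remark: you mention ``compactness of the trace at $t=0$ on bounded sets of $W(0,T)$'', but you do not need compactness---weak continuity of the composite linear map $v\mapsto\xi(0;v)$ already delivers weak convergence in $L^2(\Omega)$, and weak lower semicontinuity of the norm is all that is used afterwards.
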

\begin{proof}
Observing that
\begin{equation}\label{Eq1:NO-Low-Theorem1-proof}
J^\gamma(v)=J_\gamma(v,0)- J_\gamma(0,0)+\frac{1}{\gamma}\|\xi(0;v)\|^{2}_{L^{2}(\Omega)}\geq - J_\gamma(0,0),
\end{equation}
we can say that $\displaystyle \inf_{v\in L^{2}(Q)} J^\gamma(v)$ exists.\medskip

So let $(v_n)\in L^2(Q)$ be a minimizing sequence of $J^\gamma$. That is
\begin{equation}\label{Eq2:NO-Low-Theorem1-proof}
\dis \lim_{n\to \infty} J^\gamma(v_n)=\inf_{v\in L^{2}(Q)} J^\gamma(v).
\end{equation}
This implies that there exist a constant $C$ which does not depends on $n$ such that
\begin{equation}\label{Eq3:NO-Low-Theorem1-proof}
J^\gamma(v_n)\leq C.
\end{equation}
It then follows from the definition of $J_\gamma$ given by \eqref{Eq:functional} that
\begin{equation}\label{Eq5:NO-Low-Theorem1-proof}
\|q(v_n,0)-z_d\|^2_{L^2(Q)}+ \aleph \|v_n\|^{2}_{L^{2}(Q)}+\frac{1}{\gamma}\|\xi(0;v_n)\|^{2}_{L^{2}(\Omega)}\leq C,
\end{equation}
where  $q^n:=q(v_n,0)$ and $\xi^n=\xi(v_n,0)$ are solutions of the following equations
\begin{equation}\label{Eq6:NO-Low-Theorem1-proof}
\left\lbrace
\begin{array}{lllllll}
\dis \partial_{t} q^n + (-\Delta)^{s}q^n &=& f + v_{n} &\textrm{in}& Q, \\
q^n &=& 0 &\textrm{on}& \Xi,\\
q^n(0) &=& 0 & \textrm{in}& \Omega,
\end{array}
\right.
\end{equation}
\begin{equation}\label{Eq7:NO-Low-Theorem1-proof}
\left\lbrace
\begin{array}{llllllll}
-\dis \partial_{t} \xi^n + (-\Delta)^{s}\xi^n &= &q(v_n,0)-q(0,0) &\textrm{in}& Q, \\
\xi^n &=& 0 &\textrm{on}& \Xi,\\
\xi^n(T) &=& 0 & \textrm{in}& \Omega.
\end{array}
\right.
\end{equation}
In view of \eqref{Eq5:NO-Low-Theorem1-proof}, we have
\begin{eqnarray}
&& \|q^n\|_{L^2(Q)}\leq C, \label{Eq7:NO-Low-Theorem1-proof-a}\\
&& \|v_n\|_{L^{2}(Q)}\leq C, \label{Eq7:NO-Low-Theorem1-proof-b} \\
&& \|\xi^n(0)\|_{L^{2}(\Omega)}\leq C\sqrt{\gamma}. \label{Eq7:NO-Low-Theorem1-proof-c}
\end{eqnarray}

Since $q^n$ and $\xi^n$ are solution of nonlocal fractional diffusion equations, and from \eqref{Eq7:NO-Low-Theorem1-proof-a}, \eqref{Eq7:NO-Low-Theorem1-proof-c} and Remark \ref{rem1}, we  deduce that
\begin{eqnarray}\label{Eq8:NO-Low-Theorem1-proof}
 \|q^n\|_{L^{2}((0,T);W_{0}^{s,2}(\O))}&\leq& C, \label{Eq8:NO-Low-Theorem1-proof-a} \\
 \dis \left\|\dis \partial_{t} q^n \right\|_{L^2((0,T); W^{-s,2}(\O))}&\leq& C, \label{Eq8:NO-Low-Theorem1-proof-b} \\
\|\xi^n\|_{L^2((0,T);W_0^{s,2}(\O))}&\leq& C, \label{Eq8:NO-Low-Theorem1-proof-c}\\
\dis \left\|\dis \partial_{t} \xi^n \right\|_{L^2((0,T); W^{-s,2}(\O))}&\leq& C. \label{Eq8:NO-Low-Theorem1-proof-d}
\end{eqnarray}
We deduce that there exists $q^\gamma,\xi^\gamma\in W(0,T),$   $u^\gamma\in L^2(Q)$  and $\beta\in L^2(\Omega)$ such that
\begin{eqnarray}
 q^n&\rightharpoonup& q^\gamma \hbox{ weakly in }L^2((0,T);W_0^{s,2}(\O)),\label{Eq9:NO-Low-Theorem1-proof11} \\
 \dis \partial_{t} q^n &\rightharpoonup&\dis \partial_{t} q^\gamma \hbox{ weakly in }L^2((0,T);W^{-s,2}(\O)),\label{Eq9:NO-Low-Theorem1-proof11b} \\
 \xi^n&\rightharpoonup& \xi^\gamma \hbox{ weakly in }L^2((0,T);W_0^{s,2}(\O)),\label{Eq9:NO-Low-Theorem1-proof12} \\
 \dis \partial_{t} \xi^n &\rightharpoonup&\dis \partial_{t} \xi^\gamma\hbox{ weakly in }L^2((0,T);W^{-s,2}(\O)),\label{Eq9:NO-Low-Theorem1-proof12b} \\
 v_n&\rightharpoonup& u^\gamma \hbox{ weakly in }L^2(Q), \label{Eq9:NO-Low-Theorem1-proof14}\\
 \xi^n(0)&\rightharpoonup& \beta \hbox{ weakly in } L^2(\Omega).\label{Eq9:NO-Low-Theorem1-proof13}
\end{eqnarray}
\begin{remark} In view of \eqref{Eq9:NO-Low-Theorem1-proof11} and \eqref{Eq9:NO-Low-Theorem1-proof12}
we have on one hand that
 \begin{eqnarray}
 q^\gamma &=~0& \hbox{ on } \Xi, \label{Eq9:NO-Low-Theorem1-proof11a0} \\
 \xi^\gamma&=0& \hbox{ on } \Xi,\label{Eq9:NO-Low-Theorem1-proof12a0}
 \end{eqnarray}
 because $L^2((0,T);W_0^{s,2}(\O))$ is a Hilbert space, and on the other hand,
 \begin{eqnarray}
 q^n&\rightharpoonup& q^\gamma \hbox{ weakly in }L^2((0,T);W_0^{s,2}(\R^N)),\label{Eq9:NO-Low-Theorem1-proof11a} \\
 \xi^n&\rightharpoonup& \xi^\gamma \hbox{ weakly in }L^2((0,T);W_0^{s,2}(\R^N)),\label{Eq9:NO-Low-Theorem1-proof12a} .
 \end{eqnarray}
 because  $q^n=\xi^n =0 $ in $\Xi$.
\end{remark}
The rest of the proof will be subdivide in two steps.\medskip

{\sc Step 1.} We we first prove that $q^\gamma= q (u^\gamma,0)$  and $\xi^\gamma =\xi (u^\gamma)$ satisfy respectively \eqref{Eq:decomp1} and \eqref{Eq2:lemma2-NO-regret}.  \par

We set 
\[
\mathbb{D}\left(\Omega\times (0,T)\right)=\left\lbrace \phi\in C_0^\infty (\R^N\times (0,T)) \hbox{ such that } \phi=0~\text{in} ~\R^{N}\setminus\Omega \right\rbrace.
\]
If we multiplying  the first equation in \eqref{Eq6:NO-Low-Theorem1-proof} by
$\phi \in \mathbb{D}\left(\Omega\times (0,T)\right)$ and integrate by parts, we obtain  that
\[
\begin{array}{llllll}
\dis \int_{0}^T\int_\Omega \partial_{t} q^n ~\phi dx~dt + \dis \frac{C_{N,s}}{2}\int_0^T\int_{\R^N}
\int_{\R^N}\frac{(q^n(x,t)-q^n(y,t))(\phi(x,t)-\phi(y,t))}{|x-y|^{N+2s}}\,dx~dy~dt =\\ 
\dis \int_{0}^T \int_\Omega (f+v_n)\phi dx~dt, \quad \forall \phi\in \mathbb{D}\left(\Omega\times (0,T)\right).
\end{array}
\] 
Passing this latter identity to the limit when $n\to 0$ while using \eqref{Eq9:NO-Low-Theorem1-proof11}-\eqref{Eq9:NO-Low-Theorem1-proof14} and \eqref{Eq9:NO-Low-Theorem1-proof11a}-\eqref{Eq9:NO-Low-Theorem1-proof12a}, we have that
\[
\begin{array}{llllll}
\dis \int_{0}^T\int_\Omega ~\phi \partial_{t} q^\gamma  dx~dt + \dis \frac{C_{N,s}}{2}\int_0^T\int_{\R^N}
\int_{\R^N}\frac{(q^\gamma(x,t)-q^\gamma(y,t))(\phi(x,t)-\phi(y,t))}{|x-y|^{N+2s}}\,dx~dy~dt =\\
\dis  \int_{0}^T \int_\Omega \phi (f+u^\gamma) dx~dt,\quad \forall \phi\in \mathbb{D}\left(\Omega\times (0,T)\right).
\end{array}
\]
Therefore,  using the formula of integration by parts given by \eqref{Eq:Int-Part}, we deduce that
\[
\begin{array}{llllll}
\dis \int_{0}^T\int_\Omega \phi \partial_{t} q^\gamma~ dx~dt + \dis \int_0^T\int_{\Omega} \phi(-\Delta)^s q^\gamma dx~dt =
\dis  \int_{0}^T \int_\Omega \phi(f+u^\gamma) dx~dt,\quad \forall \phi \in \mathbb{D}\left(\Omega\times (0,T)\right).
\end{array}
\]
Hence we deduce that
\begin{equation}\label{inter1}
\partial_{t} q^{\gamma} + (-\Delta)^s q^{\gamma} = f+u^\gamma \hbox{ in } Q.\end{equation}

Now, multiplying  the first equation in \eqref{Eq6:NO-Low-Theorem1-proof} by $\phi\in C^\infty (\R^N\times (0,T))$  with $\phi=0$ on $\Xi$ and $\phi(T)=0$ in $\Omega$, we have that
\begin{equation}\label{inter2}
\begin{array}{llllll}
\dis -\int_{0}^{T}\int_{\Omega} \partial_{t} \phi ~q^n~ dx~dt + \dis \frac{C_{N,s}}{2}\int_0^T\int_{\R^N}
\int_{\R^N}\frac{(q^n(x,t)-q^n(y,t))(\phi(x,t)-\phi(y,t))}{|x-y|^{N+2s}}\,dx~dy~dt =\\
\dis  \int_{0}^T \int_\Omega (f+v_n)\phi ~dx\,dt, \quad \forall \phi~\in \mathbb{D}\left(\Omega\times (0,T)\right).
	\end{array}
\end{equation}
Passing \eqref{inter2} to the limit when $n\to 0$ while using \eqref{Eq9:NO-Low-Theorem1-proof11}-\eqref{Eq9:NO-Low-Theorem1-proof14} and \eqref{Eq9:NO-Low-Theorem1-proof11a}-\eqref{Eq9:NO-Low-Theorem1-proof12a}, we get
\begin{equation}\label{inter3}
\begin{array}{llllll}
\dis -\int_{0}^T\int_\Omega \partial_{t} \phi~q^\gamma ~dx~dt + \dis \frac{C_{N,s}}{2}\int_0^T\int_{\R^N}
\int_{\R^N}\frac{(q^\gamma(x,t)-q^\gamma(y,t))(\phi(x,t)-\phi(y,t))}{|x-y|^{N+2s}}\,dx~dy~dt =\\
\dis  \int_{0}^T \int_\Omega (f+u^\gamma)\phi~ dx\,dt,\\
 \forall \phi\in C^\infty (\R^N\times (0,T)) \hbox{ such that } \phi=0 \hbox{ on } \Xi \hbox{ and }\phi(T)=0 \hbox{ in }\Omega.
	\end{array}
\end{equation}
In view of \eqref{inter1}, $(-\Delta)^{s} q^\gamma \in L^2(Q)$ and consequently in $L^2(\R^N)$ since $q^\gamma=0$ in $\Xi$. On the other hand, as $q^\gamma \in W(0,T)$, we have that $q^\gamma(0)$ and $q^\gamma(T)$ exist and belong to $L^2(\Omega)$. Therefore, using \eqref{inter1} in \eqref{inter3}, we get
\[
\begin{array}{llllll}
\dis \int_\Omega q^{\gamma} (0)\phi(0) dx +\int_{0}^T\int_\Omega \partial_{t} q^{\gamma}~\phi dx~dt + \dis \int_0^T\int_{\Omega} (-\Delta)^{s}q^{\gamma} \phi dx~dt =\\
\dis  \int_{0}^T \int_\Omega (f+u^\gamma)\phi dx\,dt,
\\
\forall \phi\in C^\infty (\R^N\times (0,T)) \hbox{ such that } \phi=0 \hbox{ on } \Xi \hbox{ and }\phi(T)=0 \hbox{ in }\Omega,
\end{array}
\]
 which in view of \eqref{inter1} yields
\[
\begin{array}{llllll}
\dis \int_\Omega q^\gamma (0)\phi(0) dx =0,
\\
\forall \phi\in C^\infty (\R^N\times (0,T)) \hbox{ such that } \phi=0 \hbox{ on } \Xi \hbox{ and }\phi(T)=0 \hbox{ in }\Omega.
	\end{array}
\]
Hence
\begin{equation}\label{inter4}
q^\gamma(0)=0 \hbox{ in } \Omega.
\end{equation}
It then follows from \eqref{Eq9:NO-Low-Theorem1-proof11a0}, \eqref{inter1} and \eqref{inter4} that $q^\gamma= q(u^\gamma)$ is solution of

\begin{equation}\label{Eq16:NO-Low-Theorem1-proof}
\left\lbrace
\begin{array}{llllllll}
\dis \partial_{t} q^{\gamma}(u^\gamma,0) + (-\Delta)^{s}q^{\gamma}(u^\gamma,0) &=& f+u^\gamma &\textrm{in}& Q, \\
q^{\gamma}(u^\gamma,0) &=& 0 &\textrm{on}& \Xi,\\
q^{\gamma}(0,u^\gamma,0) &=&0 & \textrm{in}& \Omega.
\end{array}
\right.
\end{equation}
Proceeding as above, we prove that   $\xi^\gamma=\xi(u^\gamma)$ satisfies \begin{equation}\label{Eq:nolowregret2}
\left\lbrace
\begin{array}{llllll}
-\dis \partial_{t}\xi^\gamma(u^\gamma) + (-\Delta)^{s}\xi^\gamma(u^\gamma) &=& q^\gamma(u^\gamma,0)-q(0,0) &\textrm{in}& Q,  \\
\xi^\gamma(u^\gamma) &= &0 &\textrm{on}& \Xi,\\
\xi^\gamma(T,u^\gamma) &=& 0 & \textrm{in}& \Omega.
\end{array}
\right.
\end{equation}
Moreover using \eqref{Eq9:NO-Low-Theorem1-proof12b} and \eqref{Eq9:NO-Low-Theorem1-proof13}, we have that
\begin{equation}\label{Eq21:NO-Low-Theorem1-proof}
\xi^\gamma(0)= \xi(0;u^\gamma)=\beta.
\end{equation}

\noindent{\sc Step 2.} From weak lower semi-continuity of the function $v\mapsto J^\gamma(v)$, we obtain
\[
J^\gamma(u^\gamma)\leq \lim_{n\rightarrow \infty}\inf J^\gamma(v_n).
\]
Hence, according to \eqref{Eq2:NO-Low-Theorem1-proof}, we deduce that
\[
J^\gamma(u^\gamma)= \inf_{v\in L^2(Q)} J^\gamma(v).
\]
The uniqueness of $u^\gamma$ is straightforward from the strict convexity of $J^\gamma$.
\end{proof}

\begin{theorem}\label{NO-Low-Theorem2}
Let $u^\gamma\in L^2(Q)$ be the optimal control solution of \eqref{Eq5:NO-Low}. Then there exist $\psi^\gamma\in W(0,T)$ and $\phi^\gamma\in W(0,T)$ such that $(q^\gamma := q(u^\gamma,0), \xi^\gamma, \psi^\gamma, \phi^\gamma)$ satisfies the following optimality system:

\begin{equation}\label{Eq:qgama}
\left\lbrace
\begin{array}{lllllll}
\dis \partial_{t} q^\gamma + (-\Delta)^{s}q^\gamma &=& f + u^\gamma&\hbox{in}& Q,\\
q^\gamma &=& 0 &\hbox{on}&  \Xi,\\
q^\gamma(0) &=& 0 &\hbox{in}&  \Omega,
\end{array}
\right.
\end{equation}
\begin{equation}\label{xig}
\left\lbrace
\begin{array}{lllllll}
-\dis \partial_{t} \xi^\gamma  + (-\Delta)^{s}\xi^\gamma &=& q(u^\gamma,0)-q(0,0) &\hbox{in}&  Q, \\
\xi^\gamma &=& 0 &\hbox{on}&  \Xi,\\
\xi^\gamma(T) &= &0 &\hbox{in}& \Omega,
\end{array}
\right.
\end{equation}
\begin{equation}\label{Eq:spigama}
\left\lbrace
\begin{array}{lllllll}
\dis \partial_{t} \psi^\gamma + (-\Delta)^{s}\psi^\gamma &=& 0 &\hbox{in}&  Q,\\
\psi^\gamma &=& 0 \textcolor{white}{q^\gamma(u^\gamma,0)-z_d} &\hbox{on}&  \Xi,\\
\psi^\gamma(0) &= &-\frac{1}{\sqrt{\gamma}}\xi^\gamma(0) &\hbox{in}& \Omega,
\end{array}
\right.
\end{equation}
\begin{equation}\label{Eq:phigama}
\left\lbrace
\begin{array}{llllllll}
-\dis \partial_{t} \phi^\gamma + (-\Delta)^{s}\phi^\gamma &= & q^\gamma +z_d-\frac{1}{\sqrt{\gamma}}\psi^\gamma  &\hbox{in}&  Q, \\
\phi^\gamma &=& 0 &\hbox{on}&  \Xi,\\
\phi^\gamma(T) &=& 0 &\hbox{in}&  \Omega,
\end{array}
\right.
\end{equation}
and
\begin{equation}\label{Eq:ugama}
\aleph u^\gamma+\phi^\gamma=0 \hbox { in } Q.
\end{equation}
\end{theorem}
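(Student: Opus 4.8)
The statement to be proved is the optimality system \eqref{Eq:qgama}--\eqref{Eq:ugama} characterizing the low-regret control $u^\gamma$. Since $u^\gamma$ is the unique minimizer of the strictly convex, $C^1$ functional $J^\gamma$ over the Hilbert space $L^2(Q)$, the natural route is to compute the Euler--Lagrange equation $\langle (J^\gamma)'(u^\gamma), v\rangle = 0$ for all $v \in L^2(Q)$ and then reorganize the resulting expression by introducing adjoint states. First I would record that the map $v \mapsto q(v,0)$ is affine: with $\hat q(v) := q(v,0) - q(0,0)$ we have that $\hat q(v)$ solves the homogeneous-data problem with source $v$, and $v \mapsto \hat q(v)$ is linear and continuous $L^2(Q)\to W(0,T)$; likewise $v \mapsto \xi(v)$ is linear and continuous, being the solution operator of \eqref{Eq2:lemma2-NO-regret} composed with $v\mapsto \hat q(v)$. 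Hence $J^\gamma$ is a quadratic functional and its Gateaux derivative at $u^\gamma$ in direction $v$ is
\[
\tfrac12\langle (J^\gamma)'(u^\gamma),v\rangle = \int_Q (q^\gamma - z_d)\,\hat q(v)\,dx\,dt + \aleph \int_Q u^\gamma v\,dx\,dt + \frac{1}{\gamma}\int_\Omega \xi^\gamma(0)\,\xi(0;v)\,dx,
\]
where $q^\gamma = q(u^\gamma,0)$ and $\xi^\gamma = \xi(u^\gamma)$. This must vanish for every $v$.

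**Introducing the adjoints.** The work is to rewrite the three terms above as a single integral $\int_Q(\cdots)v$. The term $\int_Q u^\gamma v$ is already of that form. For $\int_\Omega \xi^\gamma(0)\xi(0;v)\,dx$, I would introduce $\psi^\gamma$ as the solution of the forward problem \eqref{Eq:spigama} with initial datum $-\tfrac{1}{\sqrt\gamma}\xi^\gamma(0)$ and no source; multiplying the equation for $\xi(v)$ (which has source $\hat q(v)$, terminal datum $0$) by $\psi^\gamma$ and integrating by parts over $Q$ — using the integration-by-parts formula \eqref{Eq:Int-Partbis} for the nonlocal bilinear form and the usual time integration by parts, with the boundary terms on $\Xi$ vanishing since all states are zero there — yields
\[
-\frac{1}{\sqrt\gamma}\int_\Omega \xi^\gamma(0)\,\xi(0;v)\,dx = \int_Q \psi^\gamma\,\hat q(v)\,dx\,dt .
\]
Thus $\frac{1}{\gamma}\int_\Omega \xi^\gamma(0)\xi(0;v)\,dx = -\frac{1}{\sqrt\gamma}\int_Q \psi^\gamma \hat q(v)\,dx\,dt$, and combining with the first term the whole derivative becomes $\int_Q(q^\gamma - z_d - \tfrac{1}{\sqrt\gamma}\psi^\gamma)\hat q(v)\,dx\,dt + \aleph\int_Q u^\gamma v$. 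Next I would introduce $\phi^\gamma$ as the solution of the backward problem \eqref{Eq:phigama} with source $q^\gamma + z_d - \tfrac{1}{\sqrt\gamma}\psi^\gamma$ and terminal datum $0$; pairing this equation with $\hat q(v)$ (forward problem, source $v$, initial datum $0$) and integrating by parts as before gives
\[
\int_Q \Big(q^\gamma + z_d - \tfrac{1}{\sqrt\gamma}\psi^\gamma\Big)\hat q(v)\,dx\,dt = \int_Q \phi^\gamma\, v\,dx\,dt .
\]
(One must be mildly careful with the sign: the source in \eqref{Eq:phigama} is written with $+z_d$, so after the pairing the $z_d$ contribution flips to match the $-z_d$ in the derivative — this is exactly why $z_d$, not $-z_d$, appears there.) Substituting, the optimality condition $\langle (J^\gamma)'(u^\gamma),v\rangle=0$ for all $v\in L^2(Q)$ reads $\int_Q(\phi^\gamma + \aleph u^\gamma)v\,dx\,dt = 0$, whence \eqref{Eq:ugama}.

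**Well-posedness and regularity of the adjoints.** Before the above pairings are legitimate I would check that each of $\psi^\gamma, \phi^\gamma$ lies in $W(0,T)$ and that the integration-by-parts identities apply. For $\psi^\gamma$: $\xi^\gamma \in W(0,T)$ so $\xi^\gamma(0)\in L^2(\Omega)$ by Remark~\ref{remcont}, hence the forward problem \eqref{Eq:spigama} with $L^2(\Omega)$ initial datum and zero source has a unique solution in $W(0,T)$ by Proposition~\ref{prop1}; by Remark~\ref{rem1} in fact $\partial_t\psi^\gamma\in L^2(Q)$ and $(-\Delta)^s\psi^\gamma\in L^2(Q)$, so Proposition~\ref{propintegration_parts} is applicable. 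For $\phi^\gamma$: the source $q^\gamma+z_d-\tfrac{1}{\sqrt\gamma}\psi^\gamma\in L^2(Q)$, and the backward problem \eqref{Eq:phigama} is of the form \eqref{eqintro2}, so it is well posed in $W(0,T)$ with the analogous estimates. The pairings then reduce to the finite-energy weak formulation \eqref{weak-sol-fe} (resp.\ \eqref{weak-sol-fe1}), using the solution of one problem as test function for the other after checking the test function has the required regularity $L^2((0,T);W_0^{s,2}(\bOm))$; the time-boundary terms at $t=0$ and $t=T$ cancel in pairs thanks to the matched initial/terminal conditions (zero for $\hat q(v)$ and $\xi(v)$ at their respective ends, which kills the terms involving $\psi^\gamma(0)$ in the first pairing and $\phi^\gamma(T)$ in the second, while $\psi^\gamma(0)$ and $\xi^\gamma(0)$ are precisely what produces the remaining $\Omega$-integral).

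**Main obstacle.** The routine part is the minimizing-sequence/existence argument, but that is already done in Theorem~\ref{NO-Low-Theorem1}; here the only real subtlety is bookkeeping the signs and the boundary-in-time terms across the two successive integrations by parts so that the sources and data in \eqref{Eq:spigama}--\eqref{Eq:phigama} come out exactly as stated (in particular the factors $-\tfrac{1}{\sqrt\gamma}$ and the appearance of $+z_d$ rather than $-z_d$), and making sure that the nonlocal integration-by-parts formula \eqref{Eq:Int-Partbis} is invoked only for pairs of functions both vanishing on $\R^N\setminus\Omega$, which is the case throughout. I expect no genuine analytic difficulty beyond what Propositions~\ref{prop1} and \ref{propintegration_parts} already provide.
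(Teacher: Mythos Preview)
Your argument is essentially the paper's: both write the first-order optimality condition for $J^\gamma$ at $u^\gamma$ (the paper takes the direction $v-u^\gamma$, you take $v$; this is immaterial since the admissible set is all of $L^2(Q)$), then introduce the two adjoints $\psi^\gamma,\phi^\gamma$ and perform two successive integrations by parts --- one pairing $\xi(\cdot)$ with $\psi^\gamma$, one pairing $\hat q(\cdot)$ with $\phi^\gamma$ --- to reduce to $\int_Q(\aleph u^\gamma+\phi^\gamma)\,v=0$. Your explicit well-posedness check for the adjoints is an addition the paper leaves implicit. One caution: your parenthetical claim that the $z_d$ sign ``flips'' in the $\phi^\gamma$-pairing is not correct --- that duality gives exactly $\int_Q(\text{source of }\phi^\gamma)\,\hat q(v)=\int_Q\phi^\gamma v$ with no sign change, so the $+z_d$ in \eqref{Eq:phigama} appears to be a misprint for $-z_d$; the paper's own combination of \eqref{Eq3:theorem2-proof}, \eqref{Eq5:theorem2-proof} and \eqref{Eq6:theorem2-proof} is silent on the same discrepancy.
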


\begin{proof}
Relations \eqref{Eq16:NO-Low-Theorem1-proof} give \eqref{Eq:qgama}. To prove \eqref{Eq:spigama}, \eqref{Eq:phigama} and \eqref{Eq:ugama}, we express the Euler-Lagrange optimality conditions, which characterize the low-regret control $u^\gamma$:
\begin{equation}\label{Eq:theorem2-proof}
\frac{d}{d\tau}J^\gamma(u^\gamma+\tau(v-u^\gamma)\Big|_{\tau=0}=0,~~~\text{for all }~~v\in L^2(Q).
\end{equation}
Let $z^\gamma:=q(u^\gamma+\tau(v-u^\gamma),0)-q(u^\gamma,0)$ be the state associated with the control $(v-u^\gamma)$. Then $z$ is solution of

\begin{equation}\label{Eq2:theorem2-proof}
\left\lbrace
\begin{array}{llllllll}
\dis \partial_{t} z^\gamma + (-\Delta)^{s}z^\gamma &=& (v-u^\gamma) &\textrm{in}& Q, \\
z^\gamma &=& 0 &\textrm{on}& \Xi,\\
z^\gamma(0) &=& 0 & \textrm{in}& \Omega,
\end{array}
\right.
\end{equation}
and after computations, \eqref{Eq:theorem2-proof} gives
\begin{equation}\label{Eq3:theorem2-proof}
\begin{array}{lllll}
\dis \int_Q z^\gamma(q^\gamma-z_d)dx~dt+\int_{Q}\aleph u^\gamma(v-u^\gamma)dx~dt+\frac{1}{\gamma}\int_\Omega \xi(0;v-u^\gamma)\xi(0; u^\gamma) dx = 0, ~~\forall ~v\in L^2(Q),
\end{array}
\end{equation}
where $\xi(v-u^\gamma)$ is a solution of
\begin{equation}\label{Eq4:theorem2-proof}
\left\lbrace
\begin{array}{lllllll}
-\dis \partial_{t}\xi(v-u^\gamma) + (-\Delta)^{s}\xi(v-u^\gamma) &=& z(v-u^\gamma)  &\textrm{in}& Q, \\
\xi(v-u^\gamma) &=&0 &\textrm{on}& \Xi,\\
\xi(T; v-u^\gamma) &=& 0 & \textrm{in}& \Omega.
\end{array}
\right.
\end{equation}
To give an interpretation of \eqref{Eq3:theorem2-proof} we use the adjoint states $\phi^\gamma$ and $\psi^\gamma$  solution  of \eqref{Eq2:theorem2-proof} and \eqref{Eq3:theorem2-proof}. So if we multiply the first equations in \eqref{Eq2:theorem2-proof} and \eqref{Eq4:theorem2-proof} by $\phi^\gamma$ and $\frac{1}{\sqrt{\gamma}}\psi^\gamma$  and integrate  by part over $Q$, we get

\begin{equation}\label{Eq5:theorem2-proof}
\int_Q z^\gamma\left(q^\gamma + z_d-\frac{1}{\sqrt{\gamma}}\psi^\gamma\right)dx~dt = \int_{Q}(v-u^\gamma)\phi^\gamma dx~dt
\end{equation}
and
\begin{equation}\label{Eq6:theorem2-proof}
\frac{1}{{\gamma}}\int_\Omega\xi^\gamma(0)\xi^\gamma(0;v-u^\gamma)~dx= \frac{1}{\sqrt{\gamma}}\int_{Q}z^\gamma\psi^\gamma dx~dt.
\end{equation}
Therefore, combining  \eqref{Eq3:theorem2-proof} with  \eqref{Eq5:theorem2-proof} and \eqref{Eq6:theorem2-proof}, we obtain
\[
\aleph u^\gamma+\phi^\gamma=0 \qquad \text{in}\qquad Q.
\]
\end{proof}
We will now study the convergence of the sequences $u^\gamma, q^{\gamma}, \xi^\gamma, \phi^\gamma$ and $\psi^\gamma$.

\begin{proposition}
The optimal control $u^\gamma$ converges in $L^2(Q)$ to $u$ solution of  \eqref{Eq: no-regretintro}. Moreover, there exists $q=q(u,0),$
 $\tau_1$ $\tau_2$, $\psi$ and $\phi$ such that $(u, q := q(u,0), \xi, \psi,\phi)$ satisfies the following optimality system:
\begin{equation}\label{eqqf}
\left\lbrace
\begin{array}{lllllll}
\dis \partial_{t} q  + (-\Delta)^{s}q  &=& f + u \textcolor{white}{q(0,0)++}&\hbox{in}& Q,\\
q &=& 0 &\hbox{on}&  \Xi,\\
q(0) &=& 0 &\hbox{in}&  \Omega,
\end{array}
\right.
\end{equation}
\begin{equation}\label{eqxif}
\left\lbrace
\begin{array}{lllllll}
-\dis \partial_{t} \xi + (-\Delta)^{s}\xi &=& q(u,0)-q(0,0) &\hbox{in}&  Q, \\
\xi &=& 0 &\hbox{on}&  \Xi,\\
\xi(T) &= &0 &\hbox{in}& \Omega,
\end{array}
\right.
\end{equation}
\begin{equation}\label{eqpsif}
\left\lbrace
\begin{array}{lllllll}
\dis \partial_{t} \psi + (-\Delta)^{s}\psi &=& 0 &\hbox{in}&  Q, \\
\psi &=& 0  &\hbox{on}&  \Xi,\\
\psi^\gamma(0) &= &\tau_1 &\hbox{in}& \Omega,
\end{array}
\right.
\end{equation}
\begin{equation}\label{eqphi}
\left\lbrace
\begin{array}{llllllll}
-\dis \partial_{t} \phi  + (-\Delta)^{s}\phi &= & q +z_d-\tau_2  &\hbox{in}&  Q, \\
\phi &=& 0 &\hbox{on}&  \Xi,\\
\phi(T) &=& 0 &\hbox{in}&  \Omega,
\end{array}
\right.
\end{equation}
and
\begin{equation}\label{equ}
\aleph u+\phi=0 \hbox { in } Q.
\end{equation}
\end{proposition}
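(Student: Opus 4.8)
The plan is to derive a priori bounds on $(u^\gamma,q^\gamma,\xi^\gamma,\psi^\gamma,\phi^\gamma)$ that are uniform in $\gamma$, to extract weak limits along a sequence $\gamma\to0$, to pass to the limit in each relation of the optimality system \eqref{Eq:qgama}--\eqref{Eq:ugama} of Theorem~\ref{NO-Low-Theorem2} by the very compactness-and-weak-limit argument used in the proof of Theorem~\ref{NO-Low-Theorem1}, and finally to identify $\lim u^\gamma$ with the no-regret control through the inf--sup characterisation of Remark~\ref{rem2}.

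\emph{Step 1: uniform estimates.} Since $\xi(0;0)\equiv 0$, \eqref{Eq6:NO-Low} gives $J^\gamma(0)=0$, so $J^\gamma(u^\gamma)\le J^\gamma(0)=0$; in view of \eqref{Eq6:NO-Low} and \eqref{Eq:functional} this reads
\[
\|q^\gamma-z_d\|_{L^2(Q)}^2+\aleph\|u^\gamma\|_{L^2(Q)}^2+\tfrac1\gamma\|\xi^\gamma(0)\|_{L^2(\Omega)}^2\ \le\ \|q(0,0)-z_d\|_{L^2(Q)}^2 .
\]
Hence $\{u^\gamma\}$, $\{q^\gamma\}$ are bounded in $L^2(Q)$, $\|\xi^\gamma(0)\|_{L^2(\Omega)}\le C\sqrt\gamma$, and $\phi^\gamma=-\aleph u^\gamma$ is bounded in $L^2(Q)$. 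Feeding $f+u^\gamma$ (resp.\ $q^\gamma-q(0,0)$) as source into \eqref{Eq:qgama} (resp.\ \eqref{xig}) and invoking the estimates \eqref{estpt}--\eqref{estp}, $\{q^\gamma\}$ and $\{\xi^\gamma\}$ are bounded in $W(0,T)$; since $\psi^\gamma(0)=-\tfrac1{\sqrt\gamma}\xi^\gamma(0)$ is then bounded in $L^2(\Omega)$, the same estimates applied to \eqref{Eq:spigama} make $\{\psi^\gamma\}$ bounded in $W(0,T)$. The one bound that does not come for free — and which I expect to be the main obstacle — is a bound, uniform in $\gamma$, for $\tfrac1{\sqrt\gamma}\psi^\gamma$ in $L^2(Q)$, equivalently for $\phi^\gamma$ in $W(0,T)$: the naive energy estimate for the forward problem \eqref{Eq:spigama}, applied to $\tfrac1{\sqrt\gamma}\psi^\gamma$ with initial datum $-\tfrac1{\gamma}\xi^\gamma(0)$, only yields $\|\tfrac1{\sqrt\gamma}\psi^\gamma\|_{L^2(Q)}\le\tfrac C{\gamma}\|\xi^\gamma(0)\|_{L^2(\Omega)}\le C\gamma^{-1/2}$. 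To circumvent this one must either exploit the optimality relation \eqref{Eq:ugama} together with \eqref{Eq:phigama} to close a self-contained a priori estimate, or pass to the limit in \eqref{Eq:phigama} with $\tau_2$ understood as the limit of $\tfrac1{\sqrt\gamma}\psi^\gamma$ in $\mathbb{D}'(\Omega\times(0,T))$ and read \eqref{eqphi} in that sense.

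\emph{Step 2: passage to the limit.} Along a subsequence: $u^\gamma\weak u$ in $L^2(Q)$; $q^\gamma\weak q$, $\xi^\gamma\weak\xi$, $\psi^\gamma\weak\psi$ in $L^2((0,T);W^{s,2}_0(\O))$ together with the corresponding weak convergences of $\partial_t q^\gamma,\partial_t\xi^\gamma,\partial_t\psi^\gamma$ in $L^2((0,T);W^{-s,2}(\O))$; $\phi^\gamma\weak\phi$ (in $W(0,T)$ if Step~1 closes, in $L^2(Q)$ otherwise); $\psi^\gamma(0)\weak\tau_1$ in $L^2(\Omega)$; $\xi^\gamma(0)\to0$ strongly in $L^2(\Omega)$, since $\|\xi^\gamma(0)\|\le C\sqrt\gamma$; and $\tfrac1{\sqrt\gamma}\psi^\gamma\weak\tau_2$. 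Testing \eqref{Eq:qgama}, \eqref{xig}, \eqref{Eq:spigama}, \eqref{Eq:phigama} against functions in $\mathbb{D}(\Omega\times(0,T))$ and against $C^\infty$-functions vanishing on $\Xi$ with the prescribed trace at $t=0$ or $t=T$, exactly as in {\sc Step 1} of the proof of Theorem~\ref{NO-Low-Theorem1}, and using the integration-by-parts formula \eqref{Eq:Int-Part}, one lets $\gamma\to0$ and obtains: $q=q(u,0)$ (uniqueness, Proposition~\ref{prop1}) solves \eqref{eqqf}; $\xi$ solves \eqref{eqxif}; $\psi$ solves \eqref{eqpsif} with $\psi(0)=\tau_1$; $\phi$ solves \eqref{eqphi} with right-hand side $q+z_d-\tau_2$; and \eqref{Eq:ugama} yields $\aleph u+\phi=0$, i.e.\ \eqref{equ}. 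Moreover, continuity of the trace $\rho\mapsto\rho(0)$ on $W(0,T)$ gives $\xi^\gamma(0)\weak\xi(0)$, which together with $\xi^\gamma(0)\to0$ strongly forces $\xi(0)=\xi(0;u)=0$, i.e.\ $u\in\mathcal{U}$, the set of Remark~\ref{rem2}.

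\emph{Step 3: identification with the no-regret control and strong convergence.} Fix $v\in\mathcal{U}$. By \eqref{Eq3:lemma2-proofno} (the last integral vanishing since $\xi(0;v)=0$), $\sup_{g\in L^2(\Omega)}[J(v,g)-J(0,g)]=J(v,0)-J(0,0)$, while \eqref{Eq6:NO-Low} gives $J^\gamma(v)=J(v,0)-J(0,0)$; hence
\[
\big(J(u^\gamma,0)-J(0,0)\big)\ \le\ J^\gamma(u^\gamma)=\inf_{w\in L^2(Q)}J^\gamma(w)\ \le\ J^\gamma(v)=J(v,0)-J(0,0).
\]
Taking the inferior limit and using the weak lower semicontinuity of $w\mapsto\|q(w,0)-z_d\|_{L^2(Q)}^2+\aleph\|w\|_{L^2(Q)}^2$ (with $u^\gamma\weak u$, $q(u^\gamma,0)\weak q(u,0)$), we get $J(u,0)-J(0,0)\le J(v,0)-J(0,0)$ for every $v\in\mathcal{U}$; since $\sup_g[J(\cdot,g)-J(0,g)]$ equals $J(\cdot,0)-J(0,0)$ on $\mathcal{U}$ and $+\infty$ off $\mathcal{U}$, and $u\in\mathcal{U}$, this is precisely \eqref{Eq: no-regretintro}. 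Strict convexity of $w\mapsto J(w,0)$ on the convex set $\mathcal{U}$ makes the no-regret control unique, so the whole family $(u^\gamma)_{\gamma>0}$ converges to $u$. Finally, choosing $v=u$ above gives $\|q^\gamma-z_d\|_{L^2(Q)}^2+\aleph\|u^\gamma\|_{L^2(Q)}^2\le\|q(u,0)-z_d\|_{L^2(Q)}^2+\aleph\|u\|_{L^2(Q)}^2$; comparing the $\limsup$ of the left-hand side with the $\liminf$ supplied by weak lower semicontinuity forces $\|u^\gamma\|_{L^2(Q)}\to\|u\|_{L^2(Q)}$, hence $u^\gamma\to u$ strongly in $L^2(Q)$; then $\phi^\gamma=-\aleph u^\gamma\to\phi$ in $L^2(Q)$ and, by \eqref{estpt}--\eqref{estp}, $q^\gamma\to q$, $\xi^\gamma\to\xi$, $\psi^\gamma\to\psi$ strongly in $W(0,T)$. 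As emphasised, the substantive difficulty is the uniform bound on $\tfrac1{\sqrt\gamma}\psi^\gamma$ (equivalently on $\phi^\gamma$ in $W(0,T)$) needed in Steps~1--2; the rest is a routine adaptation of the weak-compactness argument of Theorem~\ref{NO-Low-Theorem1} and of the elementary Fenchel computation underlying Remark~\ref{rem2}.
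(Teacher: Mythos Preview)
Your proposal is correct and follows essentially the same route as the paper: uniform bounds from $J^\gamma(u^\gamma)\le J^\gamma(0)=0$, weak compactness in $W(0,T)$ for $q^\gamma,\xi^\gamma,\psi^\gamma$ and in $L^2(Q)$ for $u^\gamma,\phi^\gamma$, then passage to the limit in each equation exactly as in Theorem~\ref{NO-Low-Theorem1}. The difficulty you flag (no uniform $L^2(Q)$ bound on $\tfrac{1}{\sqrt\gamma}\psi^\gamma$) is resolved in the paper precisely by your option~(b): one tests \eqref{Eq:phigama} against $\rho\in\mathbb{D}(\Omega\times(0,T))$, throws all derivatives onto $\rho$, and observes that every term except $\int_Q\tfrac{1}{\sqrt\gamma}\psi^\gamma\rho$ converges; hence that term converges too, and its limit \emph{defines} $\tau_2$ as a distribution, with \eqref{eqphi} read in $\mathbb{D}'(\Omega\times(0,T))$. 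Your Step~3 (minimality of $u$ over $\mathcal U$, uniqueness, and strong convergence of $u^\gamma$) is in fact more complete than the paper, which stops at $\xi(0;u)=0$, i.e.\ $u\in\mathcal U$, and declares $u$ a no-regret control without spelling out the minimisation or the strong convergence.
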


\begin{proof} In view of the definition of the functional $J_{\gamma}$ and Remark \ref{cont_unique}, we have that $J_{\gamma}(0) = 0$. Hence,
\begin{equation}\label{Eq:Jgama}
J_{\gamma}(u^{\gamma})\leq J_{\gamma}(0) = 0,
\end{equation}
because for any $\gamma>0$, $u^{\gamma}$ is  solution of the  problem \eqref{Eq5:NO-Low}.
It then follows from \eqref{Eq:Jgama} and the  definition of the functional $J_{\gamma}$ that	
\[
\left\|q^{\gamma}-z_{d}\right\|^{2}_{L^{2}(Q)} + \aleph \left\|u^{\gamma}\right\|^{2}_{L^{2}(Q)} +\frac{1}{\gamma}\left\|\xi^\gamma (0)\right\|^2_{L^2(Q)}\leq J(0,0)
\]
As a consequence, there exists a positive constant $C$, independent of $\gamma$ such that
\begin{eqnarray}
&& \|q^{\gamma}\|_{L^2(Q)}\leq C, \label{Eq:estimateatinitial-1}\\
&& \|u^{\gamma}\|_{L^{2}(Q)}\leq C, \label{Eq:estimateatinitial-2} \\
&& \|\xi^{\gamma}(0)\|_{L^{2}(\Omega)}\leq C\sqrt{\gamma},\label{Eq:estimateatinitial-3}\\
&& \frac{1}{\sqrt{\gamma}}\|\xi^{\gamma}(0)\|_{L^{2}(\Omega)}\leq C.\label{Eq:estimateatinitial-4}
\end{eqnarray}

since $q^\gamma,$  $\xi^\gamma$  and $\psi^\gamma$ are respectively solutions of nonlocal fractional diffusion equations \eqref{Eq:qgama}, \eqref{xig} and \eqref{Eq:spigama}, from \eqref{Eq:estimateatinitial-2}, \eqref{Eq:estimateatinitial-1},\eqref{Eq:estimateatinitial-4} and Remark \ref{rem1}, we  deduce that
\begin{eqnarray}
 \|q^\gamma\|_{L^{2}((0,T);W_{0}^{s,2}(\O))}&\leq& C, \label{Eq8:NO-Low-Theorem1-proof-af} \\
 \dis \left\|\dis \partial_{t} q^\gamma \right\|_{L^2((0,T); W^{-s,2}(\O))}&\leq& C, \label{Eq8:NO-Low-Theorem1-proof-bf} \\
\|\xi^\gamma\|_{L^2((0,T);W_0^{s,2}(\O))}&\leq& C, \label{Eq8:NO-Low-Theorem1-proof-cf}\\
\dis \left\|\dis \partial_{t} \xi^\gamma \right\|_{L^2((0,T); W^{-s,2}(\O))}&\leq& C, \label{Eq8:NO-Low-Theorem1-proof-df}\\
\|\psi^\gamma\|_{L^{2}((0,T);W_{0}^{s,2}(\O))}&\leq& C, \label{estpsig} \\
 \dis \left\|\dis \partial_{t} \psi^\gamma \right\|_{L^2((0,T); W^{-s,2}(\O))}&\leq& C. \label{estpsitg}
\end{eqnarray}
We deduce that there exists $q,\xi,\psi\in W(0,T)$, $u^\gamma\in L^2(Q)$ and $\beta,\tau_1\in L^2(\Omega)$ such that
\begin{eqnarray}
 q^\gamma &\rightharpoonup & q \hbox{ weakly in }L^2((0,T);W_0^{s,2}(\O)),\label{convqgamma} \\
 \dis \partial_{t} q^\gamma &\rightharpoonup&\dis \partial_{t} q \hbox{ weakly in }L^2((0,T);W^{-s,2}(\O)),\label{convqtgamma} \\
 \xi^\gamma&\rightharpoonup& \xi \hbox{ weakly in }L^2((0,T);W_0^{s,2}(\O)),\label{convxigamma} \\
 \dis \partial_{t} \xi^\gamma &\rightharpoonup&\dis \partial_{t} \xi \hbox{ weakly in }L^2((0,T);W^{-s,2}(\O)),\label{convxitgamma} \\
 u^\gamma&\rightharpoonup& u \hbox{ weakly in }L^2(Q), \label{convugamma}\\
 \xi^\gamma(0)&\rightharpoonup& \beta \hbox{ weakly in } L^2(\Omega),\label{convxi0gamma}\\
 \dis \frac{1}{\sqrt{\gamma}}\xi^\gamma(0)&\rightharpoonup& \tau_1 \hbox{ weakly in } L^2(\Omega).\label{convrxi0gamma}
\end{eqnarray}
Consequently, we prove as in pages \pageref{Eq9:NO-Low-Theorem1-proof11a0}--\pageref{Eq16:NO-Low-Theorem1-proof} that $q=q(u,0)\in W(0,T),$  $\xi=\xi(u)\in W(0,T)$ and $\psi\in W(0,T)$  satisfy respectively \eqref{eqqf}, \eqref{eqxif} and \eqref{eqpsif}.\medskip

Using \eqref{Eq:ugama} and \eqref{Eq:estimateatinitial-2}, we have that there exists $C>0$ independent of $\gamma$
\[
\|\phi^\gamma\|_{L^2(Q)}\leq C.
\]
Hence there exists $\phi\in L^2(Q)$ such that
\begin{equation}\label{convphiL2}
\phi^\gamma \rightharpoonup \phi \hbox{ weakly in } L^2(Q).
\end{equation}
Next, if we multiply the first equation in \eqref{Eq:phigama} by $\rho \in \mathbb{D}\left(\Omega\times (0,T)\right)$ and integrate by parts, we get
\[
\dis \int_Q \phi^\gamma \left(-\partial_{t} \rho + (-\Delta)^s\rho\right)~dx~dt -\int_Q (q^\gamma -z_d)\rho~dx~dt =
\dis\int_Q\frac{1}{\sqrt{\gamma}} \psi^\gamma \rho~ dx~dt.
\]
Passing this latter inequality  at the limit when $\gamma \to 0$ while using \eqref{convphiL2} and \eqref{Eq:estimateatinitial-1}, we obtain that
\[
\begin{array}{llll}
\dis \int_Q \phi \left(-\partial_{t} \rho + (-\Delta)^s\rho\right)~dx~dt -\int_Q (q -z_d)\rho~ dx~dt &=&
\dis\lim_{\gamma\to 0}\int_Q\frac{1}{\sqrt{\gamma}} \psi^\gamma \rho~ dx~ dt,\\
\forall \rho \in \mathbb{D}\left(\Omega\times (0,T)\right),
\end{array}
\]
which  by integrating by parts again gives
\[
\begin{array}{llll}
\dis \int_Q \rho \left(\partial_{t} \phi + (-\Delta)^s\phi\right)~dx~dt -\int_Q (q -z_d)\rho~dx~dt &=&\dis\lim_{ \gamma\to 0}\int_Q\frac{1}{\sqrt{\gamma}} \psi^\gamma \rho~dx~dt,\\
\forall \rho \in \mathbb{D}\left(\Omega\times (0,T)\right).
\end{array}
\]
Hence we deduce  that there there exists $\tau_2$ such that
\[
\partial_{t} \phi +(-\Delta)^s\phi = q -z_d+\tau_2 \in Q,
\]
and it follows from  second and third equations in \eqref{Eq:phigama} that $\phi$ satisfies \eqref{eqphi}.
Now,  in view of \eqref{Eq:estimateatinitial-3} we have that
\[
\xi^\gamma (0) \rightarrow \xi(0;u) = 0  \text{ strongly in }  L^{2}(\Omega).
\]
As a consequence, we have that $\displaystyle \int_{\Omega}g\xi(x,0;u)~dx = 0,$ from which according to Remark \ref{rem2} means that  $u$ is solution of the no-regret control problem \eqref{Eq: no-regretintro}.
\end{proof}

\section{Conclusion}
The optimal control of diffusion equation with missing data is an important result in the theory of control of PDEs, and its fractional counterpart should have no less signiﬁcance in the theory of control of fractional PDEs. We discussed about the optimal control of diffusion equation with missing data governed by Dirichlet fractional Laplacian. We proved that if the control acts on $Q$ then we can characterise the limit of the optimal control problem \eqref{Eq: no-regret} by optimality system given by \eqref{eqqf}-\eqref{equ}.\medskip

\textbf{Acknowledgment:} The first and second author is grateful for the facilities provided by the German research Chairs. The third author was supported by the Alexander von Humboldt foundation, under the programme financed by the BMBF entitled ``German research Chairs".\medskip

\bibliographystyle{apa}


\end{document}